\begin{document}


 
\section{Introduction}

In the ordinary contact process the state at time  $t$ is a function $\xi_t : \ZZ^d \to \{0,1\}$. 1's are particles and 0's are empty sites. Particles die at rate 1, and are born at vacant sites at rate $\lambda f_1$ where $f_1$ is the number of nearest neighbors in state 1. A number of contact processes with two types of particles have been investigated. Neuhauser \cite{CN} considered the competing contact process $\xi_t : \ZZ^d \to \{0,1,2\}$. 0's again are vacant sites but now 1's and 2's are two types of particles. Each type of particle dies at rate 1, while particles of type $i$ are born at vacant sites at rate $\lambda_i f_i$ where $f_i$ is the \xyz{fraction} of nearest neighbors in state $i$,  $i=1,2$ . She showed that there was no coexistence if $\lambda_1 \neq \lambda_2$. When $\lambda_1=\lambda_2$ there is no coexistence in $d \le 2$ but there is when $d \ge 3$, behavior reminiscent \xyz{of} the voter model.

Durrett and Swindle \cite{DS} studied a contact process in which 0's are vacant sites, 1's are bushes and 2's are trees. 1's and 2's die at rate 1. Particles of type $i$ give birth at rate $\lambda_i$, and send their offspring to a randomly chosen nearest neighbor. If the site is vacant then it becomes type $i$. A 2 landing on a 1 changes the site to state 2, but a 1 landing on a 2 does nothing. In contrast to Neuhauser's model, coexistence is possible. More work on this system can be found in \cite{DM}.  

Krone \cite{K2s} studied a contact process with states 0, 1, and 2. Again 0's are vacant sites, but now 1's are young particles that cannot reproduce, while 2's are mature particles that can. Particles of type $i$ die at rate $\delta_i$. Transitions from 1 to 2 occur at a constant rate $\beta$. Vacant sites change to state 1 at rate $\lambda f_2$. Krone proved the existence of a phase transition and established some qualitative results about the phase diagram. He left a number of open problems, most of which were solved by Foxall \cite{F2s}.

\xyz{
	Lanchier and Zhang \cite{LY} studied the stacked contact process, which was then generalized by Foxall and Lanchier \cite{FL}.
	In the (generalized) stacked contact process the state space at each site is $ \{0,1,2\}$. 0 stands for empty site. 1 means there is a host but there is no symbiont  associated to the host.
	2 means there are both a host and an associated symbiont.
	0 becomes 1 at rate $\lambda_{10}f_1$ and  becomes 2 at rate 
	$\lambda_{20}f_2$. 2 and 1 becomes 0 at rate 1. 
	The symbiont is called a pathogen if $\lambda_{20}<\lambda_{10}$
	and a mutualist if $\lambda_{20}>\lambda_{10}$.
	1 becomes 2 at rate $\lambda_{21}f_2$ and 2 becomes 1 at rate $\delta$. The authors showed in \cite{FL} that in the case where $d=1$, $\delta=0$ and $\lambda_{10}>\lambda_c(\ZZ^1)>\lambda_{20}$ 
	only the host can survive locally but not the symbiont, no matter how large $\lambda_{21}$ is. 
	Here $\lambda_c(\ZZ^1)$ is the critical value of contact process in $\ZZ^1$. 
	This is
	in contrast with the mean field prediction which says there is coexistence of host and symbiont if 
	$\lambda_{10}$ and 
	$\lambda_{21}$ is large enough.
}

de Olivera, dos Santos, and Dickman \cite{OSD} introduced the symbiotic contact process  $\xi_t : \ZZ^d \to \{0,A,B,AB\}$. Here $A$'s and $B$'s are different species of particles. As in the contact process there can be at most one individual of a given type at a site, but in this process there can be one $A$ and one \xyz{$B$ at} a site. If only one type is present then the system reduces to a contact process in which particles die at rate 1, and vacant sites become occupied at rate $\lambda f_k$ where $f_k$ is \xyz{the} fraction of neighbors in state $i$.  Presence of the other type does not affect the birth rates, but the death rates of particles at doubly occupied sites is reduced to $\mu \le 1$ due to the symbiotic interaction between the two species.   

To describe the system formally, \xyz{for any site $x\in \ZZ^d$} we write the state of $x$ as $(i,j) \in \{0,1\}^2$ where $i$ is the number of individuals of species $A$ at the site and $j$ is the number of individuals of species $B$. Letting $f_A$ (resp.~$f_B$) be the fraction of neighbors that have an $A$ particle (resp.~a $B$ particle), \xyz{and $i,j \in \{0,1\}$} the transition rates are as follows

\begin{center}
	\begin{tabular}{cl}
		$(0,j) \to (1,j)$ & at rate $\lambda f_A$ \\
		$(i,0) \to (i,1)$ & at rate $\lambda f_B$ \\
		$ (1,0) \to (0,0) \quad (0,1) \to (0,0)$ & at rate 1 \\
		$(1,1) \to (0,1) \quad (1,1) \to (1,0)$ &at rate $\mu$
	\end{tabular}
\end{center}

\subsection{Mean-field calculations}

Often the first step in understanding the behavior of an interacting particle is to look at the predictions of mean-field theory in which we pretend that adjacent sites are independent. Let $p_0$, $p_A$, $p_B$ and $p_{AB}$ be the probabilities a site is in state $(0,0)$, $(1,0)$, $(0,1)$, and $(1,1)$. By considering the possible transitions we see that
\begin{align}
\frac{dp_A}{dt} & = \lambda p_0 (p_A+p_{AB}) + \mu p_{AB} - p_A -\lambda p_A(p_B + p_{AB})
\label{MF1}\\
\frac{dp_{AB}}{dt} & = 2\lambda p_A p_B + \lambda (p_A+p_B) p_{AB} - 2\mu p_{AB} 
\label{MF2}
\end{align}
See (1) in \cite{OSD}.  
We are only interested in solutions with $p_A=p_B=p$. \eqref{MF2} implies that in equilibrium we must have
$$
2\lambda p^2 + 2\lambda p p_{AB} - 2\mu p_{AB} = 0.
$$
Solving gives
\beq
p_{AB} = \frac{\lambda p^2}{\mu - \lambda p}.
\label{MF3}
\eeq
Noting that $p_0=1-p_A-p_B-p_{AB}$ and rewriting \eqref{MF1}
\beq
0 = \lambda(1-3p-p_{AB})(p+p_{AB}) + \mu p_{AB} - p
\label{MF4}
\eeq
Combining equations \eqref{MF3} and \eqref{MF4} we arrive at a quadratic equation for $p$ with coefficients that are quadratic polynomials in $\mu$ and $\lambda$.
Solving, see Section \ref{sec:mfc}, leads to the conclusions 

\begin{itemize}
	\item  For $\mu \ge 1/2$, $p$ grows continuously from zero at $\lambda=1$.
	\item When $\mu < 1/2$ there is a discontinuity at $\lambda = \sqrt{4\mu(1-\mu)}$.
\end{itemize}

\begin{figure}[tbp] 
	\centering
	\includegraphics[bb=53 57 738 555,width=4in,height=2.91in,keepaspectratio]{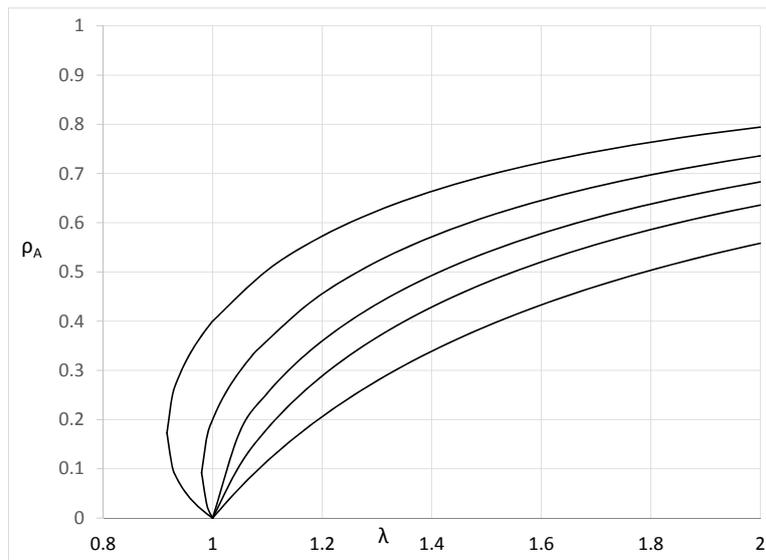}
	\caption{Plot of the equilibrium values of $\rho_A = p_A + p_{AB}$ versus $\lambda$ for $\mu = 0.3, 0.4, 0.5, 0.6, 0.8$. Curves increase as $\mu$ decreases. The curves for $\mu = 0.3$ and 0.4 show bistability when $\sqrt{4\mu(1-\mu)} < \lambda < 1$. \xyz{Here bistability refers to the fact that there are two attracting fixed point solutions to equations \eqref{MF1} and \eqref{MF2}}}. 
	\label{fig:MeanField}
\end{figure}

\subsection{Bounds on the critical value}\label{sec:bocv}

\xyz{Before defining critical value, we introduce a graphical representation for the symbiotic contact process (SCP). For each ordered pair of neighboring sites $x,y\in \ZZ^d$ we let $I^{x,y,A}$ be a Poisson process with rate $\lambda/2d$. When the Poisson clock associated to $(x,y)$ rings, we draw an arrow from $x$ to $y$ to indicate a possible infection events from $x$ to $y$, i.e., if there is an $A$ or $AB$ at $x$ at this time and there is no $A$ or $AB$ at $y$ then 
	the state at $y$ will change from 0 to $A$ or from $B$ to $AB$. Simirly we introduce the Poisson processes $I^{x,y,B}$ to indicate the possible infection events of species $B$. In what follows, we will frequently use the term  `arrow' to refer to possible infection events.}

\xyz{ Throughout the paper we will suppose $\mu \le1$. To represent the death events, we use Poisson processes
	$D^{x,A,1}$ with rate $\mu$ to denote death events that kills species $A$ at site $x$ no matter whether $B$ is present at $x$ at that time or not. 
	We also introduce Poisson processes $D^{x,A,2}$ with rate $1-\mu$ to denote the death events that kill species $A$ at site $x$ only if $B$ is not present at $x$. Similarly we also introduce two Poisson processes $D^{x,B,1}$ and $D^{x,B,2}$ to produce death events for species $B$.
	All the Poisson processes in the construction are independent.
}

Let $A_t$ be the number of sites that have an $A$, and let $B_t$ be the number of sites that have a $B$.
Let
\beq
\Omega_\infty = \{ A_t > 0 \hbox{ and } B_t > 0 \mbox{ for all } t\}
\label{omeginf}
\eeq
be the event that the process survives. \xyz{Using the graphical representation we can construct a coupling between SCP with $\mu_1=\mu_2$ and  $\lambda_1\leq \lambda_2$ by introducing new Poisson processes with rate $\lambda_2-\lambda_1$ that produce additional infections in the second process but not in the first. It is easy to see that if $\Omega_\infty$ happens in the first process then it also happens in the second. Let  $P_{AB,0}( \Omega_\infty )$ be the probability of survival when we start with one $AB$ at the origin and all other sites vacant. It follows from the coupling  that  $P_{AB,0}( \Omega_\infty )$
	is an nondecreasing function of $\lambda$ for fixed $\mu$. Hence we can define the critical value 
	\beq
	\lambda_c(\mu) = \inf\{ \lambda : P_{AB,0}( \Omega_\infty ) > 0 \}.
	\label{lamcr}
	\eeq
	If we set $\mu=1$ then the $A$'s and $B$'s are independent contact process. Using the graphical construction again we see that if $\mu \le 1$ then $\lambda_c(\mu) \le  \lambda_c(1)$ where $\lambda_c(1)$ is the critical value for survival of ordinary contact process on $\ZZ^d$.  To do this we build the process with $\mu$ using death Poisson processes $\hat D^{x,A,1}$ and  $\hat D^{x,B,1}$ that are a superposition of $D^{x,A,1}$ and $D^{x,A,2}$ and  $D^{x,B,1}$ and $D^{x,B,2}$ respectively.
}

The next result shows that symbiosis can have a great effect on the survival of the system. The upper bound uses a block construction which sacrifices accuracy to keep the renormalized sites independent, so it is very crude. As in the case of the ordinary contact process one can project $\ZZ^d$ into $\ZZ$ by mapping $x \to x_1 + \cdots + x_d$ in order to extend the result to $d>1$.

\begin{theorem}\label{bdscv}
	For any $d$, the critical value $\lambda_c(\mu)$ satisfies 
	$$
	C_1(\mu)\leq \lambda_c(\mu) \leq C_2(\mu).
	$$
\end{theorem}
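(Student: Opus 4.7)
The plan is to prove the two inequalities separately. The lower bound follows from a routine branching-process domination; the upper bound uses the block construction mentioned in the paragraph preceding the statement, together with the standard projection $x \mapsto x_1 + \cdots + x_d$ of $\mathbb{Z}^d$ onto $\mathbb{Z}$ to reduce to dimension one.

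For the lower bound I take $C_1(\mu) = \mu$. Let $A_t$ denote the number of sites carrying an $A$ (with or without a co-sited $B$), as in the text. The process $A_t$ jumps up only at $A$-birth events, and each $A$-occupied site emits $A$-births at total rate at most $\lambda$; it jumps down at $A$-death events, at rate $1$ when the site has no $B$ and at rate $\mu$ when it does, hence always at rate at least $\mu$. The graphical construction therefore couples $A_t$ to a linear birth--death branching process $Z_t$ with birth rate $\lambda$ and death rate $\mu$ so that $A_t \le Z_t$. When $\lambda < \mu$, $Z_t$ dies out almost surely, so $A_t \to 0$ a.s.\ and $\Omega_\infty$ has probability zero. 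This gives $\lambda_c(\mu) \ge \mu$.

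For the upper bound I perform a block construction in $d=1$; the general $d \ge 2$ case then follows from the projection trick as in the ordinary contact process. Fix $\mu$ and tune parameters $L \in \mathbb{N}$, $T > 0$, $\varepsilon > 0$ below. Define the good event $G$: starting from a single $AB$ at the origin at time $0$, by time $T$ there is at least one $AB$-occupied site in each of $[L, 2L]$ and $[-2L, -L]$; moreover $G$ is required to be measurable with respect to the Poisson clocks lying in the finite window $[-3L, 3L] \times [0, T]$. The aim is to show $\mathbb{P}(G) \ge 1 - \varepsilon$ whenever $\lambda$ exceeds a threshold $\lambda_0 = \lambda_0(\mu, L, T, \varepsilon)$. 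The driving mechanism: the seed $AB$ at $0$ persists for an exponential time of rate $2\mu$, so of order $1/\mu$; during this interval the clocks $I^{0, y, A}$ and $I^{0, y, B}$ for a neighbor $y$ each ring at rate $\lambda/2$, producing of order $\lambda/\mu$ birth attempts per species at $y$. A lone $A$ born at $y$ survives for time of order $1$, during which a $B$-birth from $0$ lands on $y$ with probability of order $\lambda$, so the expected number of $AB$-pairings at $y$ is of order $\lambda^2/\mu$; once $\lambda \gg \sqrt{\mu}$ at least one such pairing happens with high probability, and the newly created $AB$ iterates the mechanism at its own neighbors until a chain of length $L$ is assembled.

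Once $\mathbb{P}(G) \ge 1 - \varepsilon$ is established, translates of $G$ by multiples of $L$ in space and $T$ in time are $1$-dependent by construction, and the standard comparison with supercritical $1$-dependent oriented site percolation on $\mathbb{Z} \times \mathbb{Z}_{\ge 0}$ yields an infinite open cluster with positive probability. This forces $\mathbb{P}_{AB,0}(\Omega_\infty) > 0$ and gives $\lambda_c(\mu) \le \lambda_0(\mu, L, T, \varepsilon) =: C_2(\mu)$. The main obstacle is the quantitative probability estimate $\mathbb{P}(G) \ge 1 - \varepsilon$: one must assemble a sequence of certifying sub-events that (i) force the chain of pairings to succeed at each of the $L$ intermediate sites, (ii) remain measurable with respect to the finite space--time window so that $1$-dependence persists after translation, and (iii) are monotone in the underlying Poisson clocks so that the oriented-percolation comparison applies. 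Because the construction discards every trajectory on which a single link of the chain fails, the resulting $C_2(\mu)$ will be much larger than the conjectured $c \sqrt{\mu}$ asymptotic, justifying the "very crude" caveat in the theorem statement.
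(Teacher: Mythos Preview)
Your lower bound misses the point of the theorem. The abstract and the table following the statement make clear that the content here is $\lambda_c(\mu)\asymp\sqrt{\mu}$ as $\mu\to 0$; your branching-process domination only yields $\lambda_c(\mu)\ge\mu$, which is the wrong order. The reason your argument loses a factor of $\sqrt{\mu}$ is that it treats every $A$-death as occurring at rate $\ge\mu$, whereas an $A$ that is \emph{not} paired with a $B$ dies at rate $1$. The paper exploits this by using a weighted count $M_t=(AB)_t+\delta(A_t+B_t)$ with $\delta\in(0,1)$ and tuning $\delta$ so that $M_t$ is a supermartingale; the two resulting inequalities on $\delta$ force $\lambda\lesssim\sqrt{\mu}$ rather than $\lambda\lesssim\mu$. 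In $d=1$ the paper sharpens this further via a biased-voter-model comparison to get $C_1(\mu)=\sqrt{8\mu-4\mu^2}$. Your single-type branching bound cannot see the distinction between singly and doubly occupied sites and therefore cannot reach order $\sqrt{\mu}$.

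Your upper bound has the right heuristic (the $\lambda^2/\mu$ mechanism for creating a neighboring $AB$) but is only a sketch, and your block is more elaborate than necessary. The paper does not use a chain of length $L$: it takes blocks of just four sites, $[2m-1,2m+2]\times[nT,(n+1)T]$, chosen so that distinct blocks on the same time level are genuinely disjoint and the associated events are \emph{independent}, not merely $1$-dependent. From an $AB$ at $0$ the time to turn site $1$ into $AB$ is computed exactly: site $1$ cycles $0\to A\text{ or }B$ at rate $\lambda$ and then either back to $0$ at rate $1$ or on to $AB$ at rate $\lambda/2$, so the waiting time is a geometric sum of exponentials with mean of order $1/\lambda^2$. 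Taking $T=c/\lambda^2$ and requiring no $\mu$-death in the four-site block during $[0,T]$ gives the explicit constant $40\sqrt{\mu}$. In particular the paper \emph{does} achieve $C_2(\mu)$ of order $\sqrt{\mu}$; the ``very crude'' caveat refers to the constant $40$, not to a failure to match the $\sqrt{\mu}$ scaling as you suggest.
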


\noindent
\xyz{The values of $C_1(\mu)$ and $C_2(\mu)$ are summarized in the following table.} 

\begin{center}
	\begin{tabular}{c|c|c|c}
		&  $C_1(\mu)$ 	& $C_2(\mu)$, $\mu < 1/1600$ & $C_2(\mu)$, $\mu \ge 1/1600$ \\
		\hline
		$d$=1 &   $\sqrt{8\mu-4\mu^2}$ & 40$\sqrt{\mu}$ & $\lambda_c(1)$ \\
		$d > 1$ & $\sqrt{\mu/2}-\mu/4$ & $\min\{40d\sqrt{\mu},\lambda_c(1)\}$ & $\lambda_c(1)$ 
	\end{tabular}
\end{center}

\mn
\xyz{To explain the division into cases, recall $\lambda_c(1)\geq 1$ in $d=1$ so  $\min\{40\sqrt{\mu}, \lambda_c(1)\}=40\sqrt{\mu}$ if $\mu<1/1600$.} 

\subsection{The phase transition is continuous}\label{sec:ptc}

We use a block construction for the basic contact process that is similar to the one originally developed by Bezuidenhout and Grimmett \cite{BezGr}. 
We follow the approach in Section \ref{sec:bocv} of Liggett \cite{TL}.  
For positive number $L$, let $I = [-(a+h)L, (a+h)L]$ and $W = I^d$. The constant $a$ is given in \eqref{acond}. 
We set $h = 1/2$ to make formulas easier to write. Let ${}_W\xi_t^{[-n,n]^d}$ denote the symbiotic contact process (SCP)  starting from all sites in $[-n,n]^d$ occupied by $AB$ and no births are allowed outside of $W$. Sometimes we omit the superscript $[-n,n]^d$. The space-time box is $B(L,T) = W \times [0,1.01T]$.
\xyz{$n$, $L$ and $T$ will be chosen in the proof of Theorem \ref{opstep1}. }
In the argument in Liggett one produces occupied translates of $[-n,n]^d$ on the top and sides of the space-time box. We will instead make copies near the top and near the sides in regions that we call slabs. The top slab is
$$
H_{L,T} = I^d \times [T,1.01T]
$$
We will have $2d$ side slabs. They are 
\begin{align*}
S^{i,+}_{L,T} & = I^{i-1} \times [hL, (a+h) L] \times I^{d-i} \times [0,1.1T] \\
S^{i,-}_{L,T} & = I^{i-1} \times [-(a+h)L, -hL] \times I^{d-i}  \times [0,1.1T]
\end{align*}
where $i$ ranges from 1 to $d$.

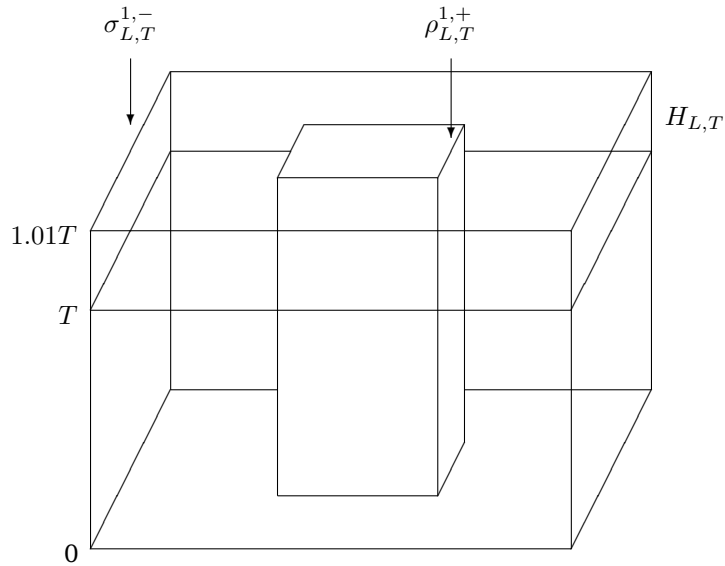
\begin{figure}[ht]
	\begin{center}
		\begin{picture}(270,240)
		\put(30,30){\line(1,0){180}}
		\put(30,30){\line(1,2){30}}
		\put(60,90){\line(1,0){40}}
		\put(240,90){\line(-1,0){70}}
		\put(210,30){\line(1,2){30}}
		\put(30,150){\line(1,0){180}}
		\put(30,150){\line(1,2){30}}
		\put(60,210){\line(1,0){180}}
		\put(210,150){\line(1,2){30}}
		\put(30,30){\line(0,1){120}}
		\put(20,25){0}
		\put(0,145){$1.01T$}
		\put(18,115){$T$}
		\put(60,90){\line(0,1){120}}
		\put(210,30){\line(0,1){120}}
		\put(240,90){\line(0,1){120}}
		\put(100,50){\line(1,0){60}}
		\put(160,50){\line(1,2){10}}
		\put(100,50){\line(0,1){120}}
		\put(170,70){\line(0,1){120}}
		\put(160,50){\line(0,1){120}}
		\put(100,170){\line(1,0){60}}
		\put(100,170){\line(1,2){10}}
		\put(110,190){\line(1,0){60}}
		\put(160,170){\line(1,2){10}}
		\put(155,225){$\rho^{1,+}_{L,T}$}
		\put(165,215){\vector(0,-1){30}}
		\put(35,225){$\sigma^{1,-}_{L,T}$}
		\put(45,215){\vector(0,-1){25}}
		\put(30,120){\line(1,0){180}}
		\put(30,120){\line(1,2){30}}
		\put(60,180){\line(1,0){45}}
		\put(240,180){\line(-1,0){70}}
		\put(210,120){\line(1,2){30}}
		\put(245,190){$H_{L,T}$}
		\end{picture}
		\caption{Picture of the block construction. To enhance visualization
			we have drawn the figure assuming that the solid with sides $\rho^{i,\pm}_{L,T}$ is opaque.}
	\end{center}
	\label{fig:block}
\end{figure}

The union of the side slabs $S_{L,T} = \cup_{i=1}^d (S^{i,+}_{L,T} \cup S^{i,-}_{L,T})$ is an ``annulus''
with (outer)  sides 
\begin{align*}
\sigma^{i,+}_{L,T} & = I^{i-1} \times \{(a+h) L\} \times I^{d-i} \times [0,1.1T] \\
\sigma^{i,-}_{L,T} & = I^{i-1} \times \{-(a+h)L\} \times I^{d-i}  \times [0,1.1T] \\
\sigma_{L,T} & = \cup_{i=1}^d (\sigma^{i,+}_{L,T} \cup \sigma^{i,-}_{L,T})
\end{align*}
and inner sides
\begin{align*}
\rho^{i,+}_{L,T} & = I^{i-1} \times \{h L\} \times I^{d-i} \times [0,1.1T] \\
\rho^{i,-}_{L,T} & = I^{i-1} \times \{-hL\} \times I^{d-i}  \times [0,1.1T] \\
\rho_{L,T} & = \cup_{i=1}^d (\rho^{i,+}_{L,T} \cup \rho^{i,+}_{L,T} )
\end{align*}

\begin{theorem}\label{opstep1}
	Suppose $\lambda_c(\mu) < \lambda_c(1)$ and that the SCP starting from a single $AB$ at the \xyz{origin} survives with positive probability and $\lambda < \lambda_c(1)$. For any $\ep>0$, there are choices of $a$ (only depending on $\lambda$), $ n, L, T$, s.t.
	\begin{equation}\label{topop}
	\P( {}_W\xi^{[-n,n]^d}_{t} = AB \hbox{ on } x+[-n,n]^d \hbox{ for some $(x,t) \in H_{L,T}$ }) \ge 1 - \ep 
	\end{equation}
	and for any $1\le i \le d$
	\begin{equation}\label{sideop}
	\P({}_W\xi^{[-n,n]^d}_{t}  = AB \hbox{ on } x+[-n,n]^d \hbox{ for some $(x,t)\in S^{i,+}_{L,T}$ })\geq 1-\ep.
	\end{equation}
	By symmetry the last result holds for $S^{i,-}_{L,T}$. 
\end{theorem}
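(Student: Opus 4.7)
The plan is to mimic the Bezuidenhout-Grimmett renormalization argument in the form of Liggett \cite{TL}, using the graphical representation set up above for the SCP. Write $\alpha = P_{AB,0}(\Omega_\infty) > 0$ for the survival probability from a single $AB$-seed. The task is to choose $n, L, T$ (and $a$) so that an $AB$-filled translate of $[-n,n]^d$ appears in each target slab with probability at least $1 - \varepsilon$.

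First I would prove a \emph{recurrence lemma}: for the unrestricted SCP starting from a single $AB$ at the origin, on the survival event $\Omega_\infty$ there almost surely exists a spacetime point $(y, s)$ with $\xi_s = AB$ on every site of $y + [-n,n]^d$. The two inputs are (a) a linear-spreading statement, showing that the $AB$-occupied set conditioned on $\Omega_\infty$ expands at positive linear speed in every direction, and (b) a positive-density statement for $AB$-sites inside the spreading region. Statement (a) is a standard Bezuidenhout-Grimmett-type spreading argument, adapted to keep $AB$-occupied sites as seeds. Statement (b) is where the hypothesis $\lambda < \lambda_c(1)$ plays an essential role: projecting onto the $A$-marginal (respectively the $B$-marginal) yields a process dominated by an ordinary contact process of rate $\lambda$, which is subcritical and must die out locally; hence survival forces $AB$-sites to form a positive fraction of the occupied region. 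A density and Borel-Cantelli argument then produces the fully $AB$-occupied $[-n,n]^d$ translate.

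Next I would upgrade to the starting configuration $[-n,n]^d$ filled with $AB$. The graphical coupling of Section \ref{sec:bocv} gives survival probability at least $1 - (1-\alpha)^{(2n+1)^d}$, which exceeds $1 - \varepsilon/3$ for $n$ large. On this survival event, statement (a) produces at least $K$ $AB$-occupied sites inside some box $[-N, N]^d$ at a large time $T_0$, with probability close to one. Applying the recurrence lemma separately to each of these seeds, via the strong Markov property and independence of the graphical ingredients in disjoint future regions, the probability that none produces an $AB$-filled $[-n,n]^d$ translate within $[T_0, T]$ is at most $(1 - \beta)^K$ for some $\beta > 0$; choosing $K$ and $T$ large makes this less than $\varepsilon/3$. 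The restriction of births to $W = [-(a+h)L, (a+h)L]^d$ costs an additional $\varepsilon/3$: the SCP is stochastically dominated by a branching random walk of jump rate $2d\lambda$, so choosing $a$ large enough compared to $T/L$ makes the restricted and unrestricted processes agree inside $[-hL, hL]^d$ up to time $1.1T$ with probability at least $1 - \varepsilon/3$. Combining the three losses gives \eqref{topop}.

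For the side-slab estimate \eqref{sideop}, the same seed-and-restart scheme applies, but with seeds at time $T_0$ chosen close to the inner face $\rho^{i, +}_{L, T}$; choosing $L = c T$ with $c$ strictly less than the linear spreading speed from statement (a) ensures that such seeds exist with high probability, after which the recurrence lemma delivers the fully $AB$-occupied translate inside $S^{i, +}_{L, T}$ with time to spare before $1.1 T$. The main obstacle is the recurrence lemma, in particular ingredient (b): positive density of $AB$-sites within the surviving cluster has no analogue in the single-species CP block argument, and as above it relies essentially on $\lambda < \lambda_c(1)$. Everything else follows a well-trodden template; identifying the correct analogue of ``occupied'' (namely $AB$-occupied) and certifying its positive density is the real content of the proof.
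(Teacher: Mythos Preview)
Your plan has the right instinct but two of its key steps fail. Ingredient (b) as written is false: the $A$-marginal of the SCP is \emph{not} dominated by an ordinary rate-$\lambda$ contact process, since the death rate at doubly-occupied sites is $\mu<1$; the domination goes the other way. What is actually available, and what the paper uses, is that an $A$ which \emph{travels alone}---never shares a site with a $B$ along its entire path in the graphical representation---behaves as a subcritical contact process and hence decays exponentially (see \eqref{expdecay}, \eqref{expdecay2}). That is a pathwise statement, not a statement about the marginal law, and it does not by itself yield a positive density of $AB$'s in the occupied region. Your seed-and-restart scheme also assumes independence of the futures of the $K$ seeds, bounding failure by $(1-\beta)^K$; but the SCP is not additive (the paper stresses this in Section~\ref{sec:ptc}), so distinct seeds interact through reduced death rates and no such product bound is available. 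Finally, ingredient (a)---linear spreading of the $AB$-set on $\Omega_\infty$---is essentially what the theorem is meant to establish, so calling it ``a standard Bezuidenhout--Grimmett-type spreading argument'' is circular: the BG argument for the ordinary contact process leans on additivity and duality, which is exactly what is missing here.

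The paper takes an indirect route that sidesteps all three issues. Lemma~\ref{block_new} shows, via the traveling-alone decay and L\'evy's $0$--$1$ law, that if there are few $AB$'s in both the top slab $H_{L,T}$ and the side slab $S_{L,T}$ then few particles can escape $B(L,T)$ at all, so the restricted process is doomed; hence $\limsup_j\P(R_{L_j,T_j})\le\P(\Omega_0)$. Lemma~\ref{persis_new} then pins down a scale $T_j=T(L_j)$ at which $\P(AB(S_{L_j,T_j})\ge N)=1-\sqrt{\P(\Omega_0)}$, and positive correlations split this into per-face estimates \eqref{optop}, \eqref{oplr}. Only at the very end does a sequential stopping-time algorithm (Lemma~\ref{claim}, in the spirit of Bezuidenhout--Gray \cite{BG}) convert ``many $AB$'s in the slab'' into ``one $AB$-filled $[-n,n]^d$ cube'': it examines $AB$'s one at a time in disjoint $(x+[-n,n]^d)\times[t,t+1]$ regions of the graphical representation, which manufactures the independence you tried to assume globally.
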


\noindent
This is the analog of Theorem 2.12 in Liggett \cite{TL}. Once this is done one can repeat the comparison with oriented percolation described 
on pages 51--55 in \cite{TL} to show that the SCP dies out at the critical value. The restriction to $\lambda < \lambda_c(1)$ in Theorem \ref{opstep1} is needed because our proof uses the fact that the system with only one type of particle is subcritical.
It is natural to 

\begin{conjecture}\label{strineq}
	If $\mu<1$ then $\lambda_c(\mu) < \lambda_c(1)$
\end{conjecture}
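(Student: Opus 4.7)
The goal is to establish strict monotonicity of $\lambda_c$ in $\mu$ at $\mu=1$. Fix any $\mu<1$; the plan is to show that for some $\lambda$ strictly below $\lambda_c(1)$ the SCP with parameters $(\lambda,\mu)$ survives from a large finite seed. Once this is known, one recovers the full block picture of Theorem~\ref{opstep1} and in particular $\lambda_c(\mu)\le\lambda<\lambda_c(1)$. Note that at $\mu=1$ the $A$ and $B$ components are independent contact processes, so the only mechanism that can push the critical value down is the rate $(1-\mu)$ suppression of death at doubly-occupied sites.

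I would combine a perturbative coupling with the block construction already available. Start from an all-$AB$ patch $[-n,n]^d$ inside a big box $W$ and couple, via the graphical representation of Section~\ref{sec:bocv}, the SCP at $(\lambda,\mu)$ with the SCP at $(\lambda,1)$: the two agree except at the marks of $D^{x,A,2}$ and $D^{x,B,2}$ that fall on doubly-occupied sites, which are suppressed in the first process and act in the second. On the event that the $\mu=1$ process contains, at some intermediate time, a macroscopic set of $AB$ sites, a suppressed mark produces an extra $A$ or $B$ particle that, together with its partner at the same site, can seed a fresh $AB$ colony subject only to the infection arrows $I^{x,y,\cdot}$ of rate $\lambda$. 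The heart of the argument would be to show that, for $\lambda$ close enough to $\lambda_c(1)$ from below and for $n,L,T$ sufficiently large, at least one of these rescued seeds reaches the top slab $H_{L,T}$ or a side slab $S^{i,\pm}_{L,T}$ with a full $x+[-n,n]^d$ $AB$ neighborhood, closing the renormalization.

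The principal obstacle is quantitative. Since $\lambda<\lambda_c(1)$, both single-type components die out on a scale determined by how close $\lambda$ is to $\lambda_c(1)$, and consequently the integrated space-time density of doubly-occupied sites is only polynomial in $n$ over the whole lifetime. A large seed supplies many potential rescue events, but one must show that a positive fraction of them in fact generate a surviving $AB$ colony, which demands near-critical lower bounds on the single-particle survival probability of the ordinary contact process. A natural route is an Aizenman--Grimmett style enhancement: use a Margulis--Russo-type identity to express $\partial/\partial\mu$ of the finite-box survival probability as the expectation of a count of pivotal $D^{x,\cdot,2}$ marks, and lower-bound this derivative uniformly in a neighborhood of $\mu=1$ by using FKG through the graphical representation to compare the joint density of $A$ and $B$ occupancy with the product of the marginals (which is strictly positive by the comparison of Theorem~\ref{bdscv}). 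Making this enhancement estimate uniform on $(0,1)$, and in particular showing that the small $(1-\mu)$ drift can overcome the subcritical decay of each species, appears to be the main technical difficulty and is presumably why the statement is left as a conjecture.
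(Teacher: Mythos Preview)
The paper does not prove this statement: it is stated as Conjecture~\ref{strineq} and left open. There is therefore no ``paper's own proof'' to compare against. Your proposal is not a proof either, and you say as much in the final paragraph; what you have written is an outline of a possible strategy together with an honest identification of the main obstacle.

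It is worth noting that the paper's discussion immediately following the conjecture explicitly casts doubt on the very route you suggest. The authors point to the Aizenman--Grimmett enhancement technique and its descendants, and then remark that in those arguments ``one uses that there is only one fluid moving through the graph, so we do not think these methods can be used to prove Conjecture~\ref{strineq}, which involves two types of fluids spreading through a graphical representation.'' Your plan is precisely an enhancement argument: differentiate the finite-box survival probability in $\mu$, identify pivotal $D^{x,\cdot,2}$ marks, and lower-bound the derivative. The difficulty you flag --- that a pivotal suppressed death must be shown to seed a surviving $AB$ colony when each single species is subcritical --- is exactly the two-fluid issue the authors have in mind. In the standard enhancement setup a pivotal edge or site need only connect to an existing supercritical cluster of the same type; here a rescued $A$ particle is useless without a nearby $B$, and both are separately dying.

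So the genuine gap is not a missing lemma but a missing mechanism: you have not indicated how to force the expected number of \emph{effective} pivotal events to be large when $\lambda<\lambda_c(1)$, and the FKG comparison you invoke only controls the density of $AB$ sites, not the probability that a rescued particle at such a site propagates to the boundary of the block. Absent that, the derivative bound could well degenerate as $\lambda\uparrow\lambda_c(1)$, and the argument does not close.
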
 

\noindent 
The bounds on the critical value given in Theorem \ref{bdscv} imply that $\lambda_c(\mu) < \lambda_c(1)$ for small $\mu$. Strict monotonicity results for critical values have been proved for percolation, the Ising model, and other related systems. For early results see Chapter 10 of Kesten's book on percolation \cite{Kesten}. Given a pair of lattices, ${\cal L}_1$ and ${\cal L}_2$, Menshikov \cite{MN} gave conditions that guaranteed that the site percolation critical values $p_c({\cal L}_1) > p_c({\cal L}_2)$. His results were later generalized in \cite{AG} by Aizenman and Grimmett, who showed that the critical value for an infinite entangled set of open bonds in $\ZZ^3$ is smaller than that the critical value for an infinite connected component of open bonds. They also showed for ferromagnetic spin systems that the critical temperature was a strictly increasing function of the interaction strengths. For more results in this direction see Bezuidenhout, Grimmett, and Kesten \cite{BGK}, or Balister, Bollob\'as and Riordan \cite{BBS}. Results for the Ising and Potts models are proved by reducing to dependent percolation using the Fortuin-Kasteleyn representation. In the analysis of percolation one uses that there is only one fluid moving through the graph, so we do not think these methods can be used to prove Conjecture \ref{strineq}, which involves two types of fluids spreading through a graphical representation.

In the case of the ordinary contact process, $\eta_t$, a second corollary of Theorem \ref{opstep1} is the complete convergence theorem. That is if $\tau = \inf\{ t: \eta_t = \emptyset \}$ then
$$
\eta_t \Rightarrow P( \tau < \infty ) \delta_0 + P(\tau = \infty) \eta^1_\infty
$$
Here $\Rightarrow$ is convergence in distribution, $\delta_0$ the point mass on the all 0's configuration and ${\eta^1_\infty}$ is the limit starting from all sites occupied. The SCP is attractive so the limit $\xi^{AB}_\infty$ starting from all $AB$ exists, but SCP process is not additive in the sense of Harris \cite{HA}, so there is no dual process, which is a key ingredient in the contact process proof. \xyz{Therefore one can not duduce complete convergence for SCP process from Theorem \ref{opstep1}. (It is not clear to the authors whether complete convergence holds or not.)}

The absence of an additive dual creates another open problem.
\xyz{We say $\xi^{AB}_{\infty}$ is nontrivial if it puts positive mass on configurations with infinitely many $A$'s and infinitely many $B$'s.
	Using the graphical representation for the SCP introduced in Section \ref{sec:bocv}
	one can show if $\xi^{AB}_{\infty}$  is nontrivial for some $\lambda_1$, then $\xi^{AB}_{\infty}$  must also be nontrivial for any $\lambda\geq \lambda_1$. Hence for fixed $\mu \le 1$ we can define another critical value.}
$$
\lambda_e(\mu) = \inf\{ \lambda :  \xi^{AB}_\infty \hbox{ is \xyz{nontrivial}} \}.
$$

Using the graphical representation one can show that
$$
\P(\xi^{AB}_{\infty}\neq \delta_0 )\geq \P(\Omega_{\infty}),
$$
where $\Omega_{\infty}$ has been defined in \eqref{omeginf}. It is the probability that SCP starting from a sgingle site survives. 
This implies that $\lambda_e \le \lambda_c$. Toom's model shows that these two critical values are not equal in general. In this model the state of the system is $\xi_t : \ZZ^2 \to \{0,1\}$. As in the contact process occupied sites (1's)  become vacant (0) at rate 1. However now a vacant site at $x$ becomes occupied at rate $\lambda$ \xyz{only if}  both $x+(1,0)$ and $x+(0,1)$ are occupied. For Toom's model finite configuration cannot escape from a box that contains it, \xyz{
	since if
	initially $\xi_0\in R$ for some rectangle $R$, then all $x$ with $x\notin R$ can't get infected since 
	either $x+(1,0)$ or $x+(0,1)$ is not in $R$. Since we have a death rate 1 for all occupied sites, there must be a finite time when all sites of $R$ becomes empty. The process then dies out.
} It follows that
$\lambda_c = \infty$. Toom \cite{T80} proved that $\lambda_e < \infty$. See Bramson and Gray \cite{BrGr} for another approach. For some rigorous results about this model see \cite{RDsex}.  

It would be interesting if the SCP was another example in which the two critical values are different, but we have no reason to believe they should be, so we

\begin{conjecture}
	$\lambda_e(\mu) = \lambda_c(\mu)$
\end{conjecture}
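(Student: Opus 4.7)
The inequality $\lambda_e(\mu)\le \lambda_c(\mu)$ is already noted in the paper via the graphical coupling $\P(\xi^{AB}_\infty\neq \delta_0)\ge \P(\Omega_\infty)$, so the content of the conjecture is the reverse bound $\lambda_c(\mu)\le\lambda_e(\mu)$: if the upper invariant measure is nontrivial then the SCP started from a single $AB$ at the origin survives with positive probability. My plan is to prove the contrapositive, showing that for any $\lambda<\lambda_c(\mu)$ the upper invariant $\xi^{AB}_\infty$ is the point mass on the all-$(0,0)$ configuration. By attractiveness $\xi^{AB}_\infty$ exists and is translation invariant, so triviality is equivalent to proving that the probability that a site is non-vacant in $\xi^{AB}_t$ tends to $0$ as $t\to\infty$.

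The first step is to set up a backward ancestry in the graphical representation of Section \ref{sec:bocv}. For an $A$-particle at $(0,t)$ one walks backwards along $A$-infection arrows $I^{x,y,A}$ while avoiding $A$-death marks, obtaining an ancestral set in the time-$0$ slice, and analogously for $B$. The reason no honest dual exists is that the death mark $D^{x,A,2}$ is effective only when no $B$ is present at $x$, so the $A$-ancestry depends on the $B$-history. As a first pass I would conservatively pretend $B$ is present along every $A$-backward path; this upper-bounds the $A$-species by an ordinary contact process with birth rate $\lambda$ and death rate $\mu$, whose critical value is $\mu\lambda_c(1)$. That would give $\xi^{AB}_\infty=\delta_0$ for $\lambda<\mu\lambda_c(1)$, but it does not cover the whole subcritical window, since Theorem \ref{bdscv} gives $\lambda_c(\mu)$ of order $\sqrt{\mu}$, much larger than $\mu\lambda_c(1)$ of order $\mu$ when $\mu$ is small.

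To close this gap one must run a joint two-colour ancestry argument and exploit the fact that below $\lambda_c(\mu)$ neither an $A$ nor a $B$ subpath can persist on its own long enough to rescue the other. The cleanest route I see is a subcritical sharpness statement: show that $\lambda<\lambda_c(\mu)$ forces exponential decay of the two-point function $\P_{AB,0}(x\in\xi_t)$ in $|x|+t$. Granting such decay, a Peierls/contour argument on the graphical representation would rule out an infinite $AB$-cluster reaching time $t$ from the all-$AB$ initial condition and hence force $\xi^{AB}_\infty=\delta_0$. The step I expect to be the main obstacle is sharpness itself: the classical differential-inequality arguments of Aizenman--Barsky and Menshikov rely on self-duality or a BK-type inequality, and the symbiotic coupling of $A$ and $B$ possesses neither. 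Finding a replacement for sharpness, or an alternative route such as a multi-scale renormalisation that iteratively decouples $A$ and $B$, is where I expect the real work and the new ideas to lie.
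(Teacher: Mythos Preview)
This statement is labelled a \emph{Conjecture} in the paper and is left open: the authors give no proof, only the observation $\lambda_e(\mu)\le\lambda_c(\mu)$ from the graphical coupling and the remark that Toom's model shows the two critical values can differ in general. So there is no ``paper's own proof'' to compare against.

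Your proposal is therefore not a competing argument but an attempt at an open problem, and as you yourself flag, it is not a proof. The reduction to showing $\xi^{AB}_\infty=\delta_0$ for all $\lambda<\lambda_c(\mu)$ is the right reformulation, and your first pass (dominating the $A$-process by a contact process with death rate $\mu$) is honest about why it falls short: it only reaches $\lambda<\mu\lambda_c(1)$, which is strictly below $\lambda_c(\mu)\asymp\sqrt{\mu}$ for small $\mu$. The remainder of your plan hinges on a subcritical sharpness statement for the SCP, and that is precisely the missing ingredient. The standard routes to sharpness (Aizenman--Barsky/Menshikov differential inequalities, or the OSSS/decision-tree method) all require either a dual process, a BK-type correlation inequality, or at least a monotone-measure structure amenable to randomized algorithms; the SCP lacks an additive dual and the two-colour coupling defeats the usual BK factorizations. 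Without a concrete mechanism to obtain exponential decay of $\P_{AB,0}(x\in\xi_t)$ below $\lambda_c(\mu)$, the Peierls step has nothing to feed on, and the argument stalls exactly where you indicate. In short: the paper does not claim a proof, and your sketch correctly identifies the obstruction but does not overcome it.
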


\subsection{Symbiotic contact process with diffusion}

The question we address here is ``What happens if, in additions to the birth and death events, we also let particles move according to the simple exclusion process?'' This question was considered by de Olivera and Dickman \cite{OD}. To be precise, we view the symbiotic contact process as taking place on $\ZZ^d\times \{0,1\}$ with $A$'s living on level 0 and $B$'s living on level 1. Particles jump to each neighbor on the same level at rate $\ep^{-2}$, subject to the exclusion rule: if the chosen neighbor is already occupied then nothing happens. We refer to this process as the symbiotic contact process with diffusion (SCPD) and denote it by $\xi^{\ep}_{t}$.

In \cite{OD} simulations showed that for moderate diffusion rates, the process exhibits discontinuous phase transition but the transition becomes continuous again once $\ep$ gets small enough. They conjecture the critical value for $\lambda$ for small $\ep$ is 1 regardless of the value of $\mu$. Theorem \ref{SCPD} supports their conjecture.

Consider the SCPD starting from all sites occupied by $AB$. By attractiveness $\xi^{\ep,AB}_{t}$ has a weak limit as $t\to\infty$ that we denote by  $\xi^{\ep,AB}_{\infty}$. Here, we are interested in the limit of $\xi^{\ep,AB}_{\infty}$ as $\ep\to 0$.  To do this using the methods of Durrett and Neuhauser \cite{DC}
it is convenient to implement the simple exclusion dynamics using the stirring process: for each pair of neighbors $x$ and $y$ on a given level we exchange the values at $x$ and $y$ on that level at rate $\ep^{-2}$. 
Following the approach in \cite{DC} the first step is to show convergence of a ``dual process'' to a branching Brownian motion and then derive a partial differential equation for the evolution of the local densities of $A$, $B$ and $AB$, which we denote by $p^{\ep}_A(t,x)$, $p^{\ep}_B(t,x)$, $p^{\ep}_{AB}(t,x)$. We also set $q^{\ep}_A(t,x)=p^{\ep}_{AB}(t,x)+p^{\ep}_A(t,x)$ and $q^{\ep}_B(t,x)=p^{\ep}_{AB}(t,x)+p^{\ep}_B(t,x)$. As $\ep\to 0$, $q^\ep_A \to q_A$ and $q^\ep_B \to q_B$ that satisfy:
\begin{align}
\frac{\partial q_A}{\partial t}=\frac{1}{2}\Delta q_A+q_A(\lambda(1-q_A) -1+(1-\mu)q_B) \label{evoqA}\\
\frac{\partial q_B}{\partial t}=\frac{1}{2}\Delta q_B+q_B(\lambda(1-q_B) -1 +(1-\mu)q_A) \label{evoqB}
\end{align}
If we use the initial condition $q^{\ep}_A(0,\cdot)=q^{\ep}_B(0,\cdot)=1$ then by symmetry we can replace $q_B$ by $q_A$ in equation \eqref{evoqA} to get
\begin{equation}\label{evoqA2}
\frac{\partial q_A}{\partial t}=\frac{1}{2}\Delta q_A+(\lambda-1)q_A -(\lambda-1+\mu)q_A^2.
\end{equation}
The reaction term on the right-hand side is $=0$ when $q_A=0$ or $q_A=(\lambda-1)/(\lambda-1+\mu)$. If $\lambda<1$ 
the reaction term is $<0$ on $(0,1]$ so the limit is $\equiv 0$.
When $\lambda>1$, we have $f'(0)=\lambda-1>0$, so the root at 0 is unstable.
Results of Aronson and Weinberger \cite{AW75,AW78} imply that \eqref{evoqA2} has a traveling wave solution and that starting from any
initial condition $q_A(0,x) \in [0,1]$ that is not identically 0, $q_A(t,x)$ is close to $(\lambda-1)/(\lambda+\mu-1)$ on a linearly growing set. 

The fast stirring makes the states on the $A$ and $B$ lattice independent so the equilibrium density of $AB$'s is
$$
p_{AB} =  \left( \frac{\lambda-1}{\lambda+\mu-1} \right)^2
$$ 
Subtracting this from the limit of the $q_A$ we find that the limiting density of $A$'s and $B$"s are given by
$$
p_A = p_B = \frac{(\lambda-1)\mu}{(\lambda+\mu-1)^2}
$$
Combining the PDE result with a block construction one can establish the existence of a nontrivial stationary distribution when $\lambda<1$ using the methods in \cite{DC}. However, if we use Theorem 1.4 in \cite{CDP} instead we get the stronger result:

\begin{theorem}\label{SCPD}
	Fix any $\mu$ $\in$ (0,1]. (i) If $\lambda > 1$, then for small $\ep>0$, $\xi^{\ep,AB}_{\infty}$ is nontrivial, and in any
	stationary distribution that assigns mass 1 to configurations with infinitely many $A$'s and $B$'s the densities of
	$A$'s, $B$'s, and $AB$'s are close to $p_A$, $p_B$ and $p_{AB}$. 
	
	\noindent   
	(ii) If $\lambda < 1$ then for small $\ep$ we have $\xi^{\ep,AB}_{\infty} \equiv 0$.
\end{theorem}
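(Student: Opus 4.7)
The strategy is to invoke Theorem 1.4 of \cite{CDP}, which converts information about the limiting reaction-diffusion PDE into statements about existence, nontriviality, and density control for stationary distributions of the particle system under fast stirring. The excerpt has already identified the PDE and observed that its symmetric reduction \eqref{evoqA2} is of Fisher-KPP type with reaction $f(u)=(\lambda-1)u-(\lambda-1+\mu)u^2$. The remaining tasks are: justify the PDE limit, verify the hypotheses of \cite[Thm 1.4]{CDP}, and read off the density formulas.

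For the derivation of the PDE I would follow Durrett-Neuhauser \cite{DC} with modifications for the two-layer state space. One constructs a particle dual by tracing backwards through the stirring exchanges and the potential birth arrows; under the rescaling that sends lattice spacing to $\varepsilon$ and keeps time of order one, the dual paths converge to independent branching Brownian motions. Because the stirring on level $0$ is independent of the stirring on level $1$, the dual walks attached to the $A$-coordinate decouple from those attached to the $B$-coordinate in the limit, and this is exactly what makes the occupation indicators for $A$ and $B$ at a given site asymptotically independent. Taking expectations and passing to the limit yields \eqref{evoqA}-\eqref{evoqB}, while the factorization $p_{AB}=q_A q_B$ is a consequence of the same asymptotic independence.

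Part (ii) is then fairly immediate. When $\lambda<1$ the reaction term of \eqref{evoqA2} is strictly negative on $(0,1]$ (as checked in the excerpt), so for any initial data in $[0,1]$ the PDE solution decays uniformly to $0$. The fast-stirring comparison in \cite{CDP} promotes this to the statement that, for $\varepsilon$ small and $T$ large, the empirical density of $A$'s at time $T$ starting from all-$AB$ is below any prescribed $\delta$ on any prescribed bounded region; a domination by a subcritical contact process via the graphical representation of Section \ref{sec:bocv} then forces $\xi^{\varepsilon,AB}_{\infty}\equiv 0$. For part (i), with $\lambda>1$ the reaction satisfies $f(0)=f(q_*)=0$, $f'(0)=\lambda-1>0$, and $f>0$ on $(0,q_*)$, where $q_*=(\lambda-1)/(\lambda+\mu-1)$. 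Aronson-Weinberger \cite{AW75,AW78} supply a traveling wave of positive speed and the linear spreading property: any nonzero compactly supported initial datum evolves to a function close to $q_*$ on a linearly growing set. These are precisely the hypotheses of \cite[Thm 1.4]{CDP}, which yields, for all $\varepsilon$ small, a nontrivial upper invariant measure $\xi^{\varepsilon,AB}_\infty$ together with the density-control conclusion: in every stationary measure concentrated on configurations with infinitely many $A$'s and $B$'s the densities of $A$, $B$ and $AB$ lie within any prescribed $\eta>0$ of $p_A$, $p_B$ and $p_{AB}$. The announced formulas follow by substituting $q_A=q_B=q_*$ into $p_{AB}=q_A q_B$ and $p_A=q_A(1-q_B)$.

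The main obstacle I anticipate is verifying the hypotheses of \cite[Thm 1.4]{CDP} in the two-layer setting, especially the dual convergence: the dual associated with an $A$-particle reads both the birth Poisson processes on level $0$ and the death Poisson processes that query the state on level $1$, so the dual is not a single branching walk but a coupled pair whose randomness must be tracked jointly. Confirming that the limiting dual still factors into independent branching Brownian motions on the two layers, which is what underwrites \eqref{evoqA}-\eqref{evoqB} and the product form of $p_{AB}$, is the technical core; once that is in hand, the rest is a clean application of the general theorem.
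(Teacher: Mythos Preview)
Your treatment of part (i) matches the paper's: dual/influence-set convergence to branching Brownian motion, PDE limit, Aronson--Weinberger input, then \cite[Thm 1.4]{CDP} for nontriviality and density control. Section \ref{sec:thm3} follows exactly this route, and your identification of the two-layer influence set (branching to the other level at conditional-death events) is the right picture.

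The gap is in part (ii). You write that once the PDE drives the density below any $\delta$ on a bounded region, ``a domination by a subcritical contact process via the graphical representation of Section \ref{sec:bocv} then forces $\xi^{\varepsilon,AB}_{\infty}\equiv 0$.'' There is no such domination available: the $A$-marginal of the SCPD is dominated only by a contact process with death rate $\mu$ (the worst case, when a $B$ is present), and $\lambda<1$ does not make $\lambda/\mu$ subcritical. More fundamentally, the PDE comparison only tells you the density is \emph{close} to zero on a bounded space-time window; it does not prevent reinvasion from outside, and this is precisely where the paper warns that part (ii) is ``much more difficult than the first.'' The paper's argument (following \cite[Sections 4--5]{DC} and \cite[Chapter 7]{CDP}) instead runs a block construction comparing with oriented percolation to show that empty regions spread linearly, together with a second percolation argument ensuring that any block where the comparison event fails is enclosed by a connected dead region, so no particles can persist inside. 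Your shortcut skips this machinery and does not close the gap between ``density near zero'' and ``identically zero.''
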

\xyz{
	The proof of Theorem \ref{SCPD} is described in Section \ref{sec:thm3}}. All of the steps are in \cite{DC} or \cite{CDP}, but for the convenience of the reader, we will give an outline of the argument and indicate where detailed proofs can be found. It may surprise the reader to hear that the proof of the second result is much more difficult than the first. In part (i) if we can prove that the densities are close to the proposed values then we can conclude there is a non-trivial stationary distribution. However, in part (ii) we have to show that the density is 0, not just close to 0.

\subsection{Outline of the paper}

The remainder of the paper is devoted to proofs. The mean-field calculations are carried out in Section \ref{sec:mfc}. The bounds on the critical values are proved in Section \ref{sec:bdscv}.  The proof of Theorem \ref{opstep1} fills Sections \ref{sec:pfL1} to \ref{sec:pfT2}. We will now state the two lemmas that are the key to its proof. However, first we need more notations.

\xyz{ Given a finite space-time rectangular solid ${\cal R}$, the number of $AB$'s in ${\cal R}$, $AB({\cal R})$, is the size of the largest set ${\cal C}$ of space time points $(x,t)$ in ${\cal R}$ so that (i) if $(x,t) \in {\cal C}$ then $\xi(x,t)=AB$ and (ii) if $(x_1,t_1), (x_2, t_2) \in {\cal C}$ then either $x_1\neq x_2$ or $x_1=x_2$ and $|t_1-t_2| \ge 1$. Since the size of such a collection is bounded above by the cardinality of a set that has a point at each $x$ every 1 unit of time, a maximal set exists.  Similarly we can define $A({\cal R}), B({\cal R}).$ }

For some $M, N>0$, whose specific values are to be determined, let 
$$
R_{L,T} = \{ AB(H_{L,T}) \le  M, AB(S_{T,L}) \le N \},
$$
\xyz{We omit the dependence of various events on parameters other than $L$ and $T$ to make notation simpler.
	Recall we start the SCP from $[-n,n]^d$  occupied by $AB$.  Also, there is a parameter $a$ involved in the definition of the block construction in Section \ref{sec:ptc}.} In the next two lemmas $\kappa(\lambda)$ is a constant introduced in equation \eqref{expdecay} and \eqref{expdecay2}.
Let $\Omega_0$ be the event that the SCP dies out.

\begin{lemma}\label{block_new}
	For any $T_j, L_j \to \infty$ with $$\lim_{j\to \infty} T_j(\kappa(\lambda))^{aL_j}L_j^{d-1}=0 \mbox{ and }
	\lim_{j\to\infty} T_j/(\log L_j) =\infty$$ we have   
	$$
	\limsup_{j\to \infty}\P(R_{L_j,T_j})\leq \P(\Omega_0)
	$$
\end{lemma}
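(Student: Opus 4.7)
The plan is to show that on $R_{L_j,T_j}$ the SCP has become extinct with probability tending to $1$, from which $\limsup_j\P(R_{L_j,T_j})\le\P(\Omega_0)$ follows by writing $\P(R_{L_j,T_j})=\P(R_{L_j,T_j}\cap\Omega_0)+\P(R_{L_j,T_j}\cap\Omega_\infty)$ and showing the second term vanishes.

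Using the graphical construction of Section~\ref{sec:bocv} I would first build two auxiliary rate-$\lambda$ contact processes $\zeta^A,\zeta^B$ by pooling the death families $D^{\cdot,A,1}\cup D^{\cdot,A,2}$ and $D^{\cdot,B,1}\cup D^{\cdot,B,2}$ into a full rate-$1$ death while keeping the infection arrows $I^{\cdot,\cdot,A}$ and $I^{\cdot,\cdot,B}$ unchanged. The assumption $\lambda<\lambda_c(1)$ from Theorem~\ref{opstep1} makes $\zeta^A$ and $\zeta^B$ subcritical on $\ZZ^d$, so both satisfy a spatial exponential decay estimate of the form $\P(\zeta^A\text{ reaches distance }\ge r\text{ by time }t)\le Ct\kappa(\lambda)^r$. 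This is the ingredient behind the quantity $T_j\kappa(\lambda)^{aL_j}L_j^{d-1}$ in the hypothesis.

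The crucial combinatorial step is an ancestry decomposition: an $A$-infection path of ${}_W\xi$ differs from a $\zeta^A$-path only at suppressed $D^{\cdot,A,2}$ events, each of which occurs at an $AB$ space-time point. Hence every $A$-particle of ${}_W\xi$ at $(x,t)$ is the endpoint of a concatenation of $\zeta^A$-subpaths whose starting points are an initial $AB$-site of $[-n,n]^d$ or an $AB$ space-time point created during the evolution; the analogous statement holds for $B$. On $R_{L_j,T_j}$ there are at most $M+N$ slab $AB$-seeds, and any other seed (initial, or inside the inner cylinder $[-hL_j,hL_j]^d\times[0,1.01T_j]$) lies at distance $\ge aL_j$ from the outer boundary $\sigma_{L_j,T_j}$. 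Summing the subcritical estimate over the $O(T_jL_j^{d-1})$ space-time sites of $\sigma_{L_j,T_j}$ and controlling contributions from the inner seeds gives that with probability $1-o(1)$ no $A$- or $B$-particle of ${}_W\xi$ ever touches $\sigma_{L_j,T_j}$.

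Because no particle of ${}_W\xi$ ever reaches $\sigma_{L_j,T_j}$, no arrow of the graphical representation can push a particle outside $W$, so ${}_W\xi$ coincides with the unrestricted $\xi$ throughout $W\times[0,1.01T_j]$, and in particular extinction of ${}_W\xi$ implies extinction of $\xi$. The top-slab bound $AB(H_{L_j,T_j})\le M$ forces the SCP to have only a bounded supply of $AB$-occurrences at times near $T_j$; once these seeds are exhausted, the residual lone $A$'s and $B$'s are dominated by the subcritical $\zeta^A,\zeta^B$ on a box of diameter $O(L_j)$, and therefore die in time $O(\log L_j)$, which is $\ll T_j$ by the hypothesis $T_j/\log L_j\to\infty$. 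Thus on $R_{L_j,T_j}$ the SCP $\xi$ is extinct by $T_j$ with probability $1-o(1)$, giving the desired bound. The main obstacle is controlling the $\zeta^A$-restart points arising from $AB$-events inside the inner cylinder: while the slab $AB$-count is bounded by $M+N$, the inner $AB$-count is not, and one must exploit that each such seed must cross the full distance $aL_j$ to $\sigma_{L_j,T_j}$ against the subcritical decay; a related subtlety is the non-additivity of the SCP, which rules out the usual dual arguments and forces the ancestry decomposition to be carried out directly in the graphical representation.
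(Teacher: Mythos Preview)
Your reformulation $\P(R_{L_j,T_j})=\P(R_{L_j,T_j}\cap\Omega_0)+\P(R_{L_j,T_j}\cap\Omega_\infty)$ is fine, and the ancestry decomposition into $\zeta^A$-subpaths punctuated by $AB$ space-time points is exactly the mechanism the paper exploits. However, the claim that ``with probability $1-o(1)$ no $A$- or $B$-particle of ${}_W\xi$ ever touches $\sigma_{L_j,T_j}$'' is too strong and is where the argument breaks. The $N$ slab $AB$-seeds counted by $R_{L_j,T_j}$ can sit arbitrarily close to $\sigma_{L_j,T_j}$; each of them sends a descendant across $\sigma$ with a probability that is bounded away from zero uniformly in $j$. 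The correct conclusion from your ancestry decomposition (and this is what the paper proves as Lemma~\ref{noflee}) is only that on $R_{L_j,T_j}$ the \emph{number} of particles crossing $\sigma_{L_j,T_j}$, and the number alive at the top of the box, are bounded in probability by some $l=l(M,N)$, not that they are zero.

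This gap propagates: with only boundedly many escaping particles you cannot conclude extinction with probability $1-o(1)$, because we are in the regime where the SCP survives with positive probability, so those boundedly many escapees seed a surviving process with probability bounded away from zero. The paper's route around this is the step you are missing. On the event that at most $l$ particles escape, one has the \emph{uniform} lower bound $\P(\Omega_0\mid\mathcal{F}_{B(L_j,T_j)})\ge c(l)>0$ (simply kill the $l$ escapees before they reproduce). L\'evy's 0--1 law gives $\P(\Omega_0\mid\mathcal{F}_{B(L_j,T_j)})\to 1_{\Omega_0}$ a.s., so the event $\{\hbox{at most $l$ escapes}\}$ can occur infinitely often only inside $\Omega_0$; reverse Fatou then yields $\limsup_j\P(D_{L_j,T_j,l}\cap E_{L_j,T_j,l})\le\P(\Omega_0)$. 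Your argument needs this martingale step (or an equivalent) to bridge the gap between ``bounded escapes'' and ``extinction''; a direct proof that the process is dead by time $T_j$ on $R_{L_j,T_j}$ is not available.
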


\begin{lemma}\label{persis_new}
	For any sequence $L_j\to \infty$, one can choose $T_j=T(L_j)$ that satisfies 
	\begin{equation}\label{tj}
	\lim_{j\to \infty} T_j (\kappa(\lambda))^{aL_j}L_j^{d-1}=0 \mbox{ and } \liminf_{j\to\infty} \frac{T_j}{L_j}>0,
	\end{equation} 
\end{lemma}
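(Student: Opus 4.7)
The plan is to take $T_j = c L_j$ for a constant $c > 0$ to be fixed below; this makes $\liminf T_j/L_j = c > 0$ automatically. Since $\kappa(\lambda) \in (0,1)$ by \eqref{expdecay}--\eqref{expdecay2}, one has
\[
T_j (\kappa(\lambda))^{a L_j} L_j^{d-1} = c L_j^{d} \exp\bigl(-a L_j \log \kappa(\lambda)^{-1}\bigr) \to 0
\]
exponentially fast in $L_j$, so both conditions of \eqref{tj} hold for every $c > 0$. The interesting question is what value of $c$ is needed to make this $T_j$ useful in the block construction of Section \ref{sec:ptc}, which is the presumed application of the lemma and where the real content should lie.

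The role of the two bounds is as follows. The first, $T_j (\kappa(\lambda))^{aL_j} L_j^{d-1} \to 0$, should ensure that during the whole time interval $[0,1.01 T_j]$ the one-species subcritical flow (obtained by coupling the SCP with the independent pair of contact processes at $\mu=1$, valid since $\lambda < \lambda_c(1)$) fails to cross the thickness-$aL_j$ annulus $\sigma_{L,T}\setminus \rho_{L,T}$, so that truncating to the box $W$ costs at most a probability tending to $0$. The second, $T_j \geq c L_j$, should guarantee that there is linear time for the $AB$-occupied region, which is supercritical on the survival event, to reach the inner boundaries $\rho^{i,\pm}_{L_j, T_j}$. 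Once a positive propagation speed $v$ for the joint $AB$-cluster on $\Omega_\infty$ has been produced, one sets $c < 1/(v(a+h))$ so the front arrives in time and then freezes $c$; the remaining slack in the first condition is enormous and causes no trouble.

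The main obstacle I anticipate is producing a positive linear speed $v$ for the SCP. Because the SCP is not additive, the Bezuidenhout--Grimmett shape theorem for the contact process does not transfer directly, and the one-species marginals are subcritical when $\lambda < \lambda_c(1)$, so the $A$- and $B$-clusters separately cannot supply a speed. My plan would be to restrict the graphical representation to space--time paths that propagate $AB$ in a single step, that is, pairs of $I^{\cdot,\cdot,A}$ and $I^{\cdot,\cdot,B}$ arrows arriving in quick succession to a site already carrying the ``other'' species, analyze the resulting paired flow as a dependent but monotone and translation-invariant dynamics, and extract a positive speed on $\Omega_\infty$ via the subadditive ergodic theorem applied to axis hitting times, in the spirit of the proof of Theorem~1.7 in \cite{BezGr} but carried out for the paired process. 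If that coupling proves brittle, a fallback is to argue indirectly from Lemma \ref{block_new}: without a positive speed, the side-slab count in $R_{L_j, T_j}$ could not be suppressed enough for $\limsup \P(R_{L_j, T_j})$ to drop all the way to $\P(\Omega_0)$, which is exactly what that lemma delivers under the same asymptotic conditions on $T_j$ that we are using here.
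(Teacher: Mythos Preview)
Your observation that $T_j = cL_j$ satisfies both conditions in \eqref{tj} for any $c>0$ is correct, and for the lemma \emph{as literally stated in the introduction} nothing more is needed. But you have correctly sensed that something is missing: when the lemma is restated at the head of Section~\ref{sec:pfL2} it carries a third clause,
\[
\P\bigl(AB(S_{L_j,T_j})\ge N\bigr)=1-\sqrt{\P(\Omega_0)},
\]
and this is the real content. The paper does \emph{not} fix $c$ in advance and then try to show the side slabs fill up; it goes the other way. Setting $f(L,t)=\P(AB(S_{L,t})\le N)$, one has $f(L_j,0)=1$, $f$ is nonincreasing in $t$, and Lemma~\ref{ABinsides} produces times $t_j$ (subexponential in $L_j$, with $t_j\kappa(\lambda)^{aL_j}L_j^{d-1}\to 0$) for which $\limsup_j f(L_j,t_j)\le \P(\Omega_0)$. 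The desired $T_j$ is then defined by the intermediate value $f(L_j,T_j)=\sqrt{\P(\Omega_0)}$; monotonicity gives $T_j\le t_j$, hence the first condition in \eqref{tj}, while the Richardson linear-spread bound forces $\liminf T_j/L_j>0$, since if $T_j=o(L_j)$ the process cannot reach the side slab at all and $f$ stays at $1$.

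Your speculated route---extracting a positive propagation speed for the $AB$ cluster via a paired-arrow coupling and the subadditive ergodic theorem---is therefore not what the paper does, and it is a much harder program: the non-additivity of the SCP and the subcriticality of each single species when $\lambda<\lambda_c(1)$ make a direct shape/speed theorem genuinely delicate, and your sketch (restricting to near-simultaneous $A$/$B$ arrows, then invoking subadditivity for hitting times) does not yet explain why the resulting paired process is supercritical or why its hitting times are integrable. The paper's intermediate-value argument neatly sidesteps all of this: no speed is ever established, only that for \emph{some} time scale between linear and subexponential the side-slab $AB$ count hits the prescribed probability level. Incidentally, your inequality for $c$ is reversed: to reach distance $(a+h)L$ at speed $v$ one needs $c\ge (a+h)/v$, not $c<1/(v(a+h))$.
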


These Lemmas are similar to steps in Liggett's proof. For example, Lemma \ref{block_new} is analogous to his Proposition 2.8. However, here we need a large number of sites occupied by $AB$'s in $S^{i,+}_{L_j,T_j}$, but the argument in \cite{TL} only gives us a large number of sites occupied by $A$'s or $B$'s. Intuitively since the SCP with only one type contact is subcritical an isolated $A$ will soon die. So if the SCP is to survive with high probability there must be many $AB$'s . To translate this idea into a proof, we show  \xyz{in Section \ref{sec:pfL2} that} an $A$ on the inner side $\rho^{i,+}_{L,T}$ is unlikely to have a descendant on the outer side $\sigma^{i,+}_{L,T}$ unless it encounters a $B$ along the way (and hence produces an $AB$).

Lemma \ref{block_new} is proved in Section \ref{sec:pfL1} and Lemma \ref{persis_new} in Section \ref{sec:pfL2}. These two results are combined in Section \ref{sec:pfT2} to prove Theorem \ref{opstep1}.  All of this material is independent of the calculations in Sections \ref{sec:mfc} and \ref{sec:bdscv}.

\clearp

\section{Mean-field calculations} \label{sec:mfc}

Using \eqref{MF3} we get
$$
0 = \lambda\left(1-3p-\frac{\lambda p^2}{\mu - \lambda p}\right)\left(p+\frac{\lambda p^2}{\mu - \lambda p}\right) 
+ \mu\frac{\lambda p^2}{\mu - \lambda p} - p.
$$ 
Multiplying by $(\mu-\lambda p)^2$ we have
\begin{align*}
0 & = \lambda((1-3p)(\mu-\lambda p) - \lambda p^2) \cdot (p(\mu-\lambda p)+\lambda p^2) 
+ \mu\lambda p^2(\mu - \lambda p) - p (\mu - \lambda p)^2 \\
& = \lambda(\mu -(\lambda+3\mu)p + 2\lambda p^2)(p\mu)
+ (\mu^2\lambda p^2 - \mu\lambda^2 p^3) - p (\mu^2 -2\mu\lambda p + \lambda^2 p^2) \\  
& = \lambda\mu^2 p - \lambda(\lambda+3\mu)\mu p^2 + 2 \lambda^2 \mu p^3 +\mu^2\lambda p^2 - \mu\lambda^2 p^3 - \mu^2 p +2\mu\lambda p^2 - \lambda^2 p^3.
\end{align*} 
Dividing by $p$ we arrive at the quadratic equation
$$
0 = (\lambda \mu^2 - \mu^2) + (-\lambda(\lambda+3\mu)\mu + \mu^2\lambda + 2\mu \lambda) p + (2\lambda^2 \mu -\mu \lambda^2 -\lambda^2) p^2,
$$
or after some algebra
\begin{align*}
0 & = \mu^2 (\lambda - 1) + (- \lambda^2\mu - 2\mu^2\lambda + 2\mu \lambda) p + \lambda^2 (\mu -1) p^2 \\
& = \mu^2 (\lambda  - 1) + \mu\lambda (2 - \lambda - 2\mu) p + \lambda^2 (\mu -1) p^2.
\end{align*}

Using the quadratic formula the solutions are
\begin{align*}
& \frac{-\mu\lambda[2(1-\mu)- \lambda] \pm \sqrt{ \mu^2\lambda^2[2(1-\mu)- \lambda]^2 - 4 \mu^2 \lambda^2 (\lambda-1)(\mu-1) }}
{2\lambda^2(\mu-1)} \\ 
& = \mu \frac{[2(1-\mu)- \lambda] \pm \sqrt{ [2(1-\mu)- \lambda]^2 - 4(\lambda-1)(\mu-1) }}
{2\lambda(1-\mu)} 
\end{align*}
Note that in the last step we removed the minus sign out front by changing $\mu-1$ to $1-\mu$ in the denominator. 
If $\lambda=1$ the second term under the square root vanishes
and the numerator is $2(1-\mu) - 1 \pm |2(1-\mu)-1|$. If $\mu \ge 1/2$ the plus root is 0 and the minus root is $<0$ so $\lambda_c=1$
and there will be one positive root. If $\mu< 1/2$ then the plus root is $>0$ and the minus root is 0. 

To further investigate the case $\mu < 1/2$ note that inside the square root
\begin{align*}
[2(1-\mu)- &\lambda]^2 - 4(\lambda-1)(\mu-1) \\
& = \lambda^2 - 4\lambda(1-\mu) + 4(1-\mu)^2 - 4 \lambda (\mu-1) + 4 (\mu-1)
\end{align*}
Cancelling the second and fourth terms gives
$$
= \lambda^2 + 4(1-2\mu+ \mu^2 + \mu - 1) = \lambda^2 - 4\mu(1-\mu)
$$ 
so the roots can be written as
$$
\frac{\mu}{2\lambda(1-\mu)} \left\{ 2(1-\mu) - \lambda \pm \sqrt{\lambda^2 - 4\mu(1-\mu)} \right\},
$$
which agrees with (5) in \cite{OD}.

If $\lambda < \sqrt{4\mu(1-\mu)}$ the roots are complex. 
When $\mu < 1/2$ and $\lambda = \sqrt{4\mu(1-\mu)}$, 
$$
2(1-\mu) - 2\sqrt{\mu(1-\mu)}>0,
$$ 
since $\mu < 1-\mu$. Combining this with the previous observation, we see that there are two roots when $4\mu(1-\mu) \le \lambda < 1$. 
The larger root and the root at 0 are stable so we have bistability for the ODEs
\eqref{MF1} and \eqref{MF2}
in this region. (\xyz{An ODE is bistable if it  has two attracting fixed points.})

\clearp  		

\section{Proof of critical vaule bounds} \label{sec:bdscv}

\subsection{\bf Upper bound on $\lambda_c$}

We first consider the case dimension $d=1$. Once we show $\lambda_c \le C \sqrt{\mu}$ in $d=1$ we can prove the result in $d>1$ by restricting the process to a line and it follows that $\lambda_c \le C d\sqrt{\mu}$ . We use a block construction. $(m,n) \in {\cal L}$ is said to be wet if one of the sites in $\{2m,2m+1\}$ is in state $AB$ at time $nT$, where $T$ is to be specified later. Let $B_{m,n} = [2m-1,2m+2] \times [nT,(n+1)T]$. \xyz{If} $n$ is even we only allow $m$ to be even integers and if $n$ is odd we only allow $m$ to be odd integers. Note that these blocks are disjoint. So if we only use arrows with both ends in the box to spread the occupancy, the events associated with these boxes are independent.

\begin{figure}[ht]
	\begin{center}
		\begin{picture}(300,120)
		\put(30,40){$\bullet$}
		\put(50,40){$\bullet$}
		\put(70,40){$\bullet$}
		\put(90,40){$\bullet$}
		\put(110,40){$\bullet$}
		\put(130,40){$\bullet$}
		\put(150,40){$\bullet$}
		\put(170,40){$\bullet$}
		\put(190,40){$\bullet$}
		\put(210,40){$\bullet$}
		\put(230,40){$\bullet$}
		\put(250,40){$\bullet$}
		\put(30,70){$\bullet$}
		\put(50,70){$\bullet$}
		\put(70,70){$\bullet$}
		\put(90,70){$\bullet$}
		\put(110,70){$\bullet$}
		\put(130,70){$\bullet$}
		\put(150,70){$\bullet$}
		\put(170,70){$\bullet$}
		\put(190,70){$\bullet$}
		\put(210,70){$\bullet$}
		\put(230,70){$\bullet$}
		\put(250,70){$\bullet$}
		\put(30,100){$\bullet$}
		\put(50,100){$\bullet$}
		\put(70,100){$\bullet$}
		\put(90,100){$\bullet$}
		\put(110,100){$\bullet$}
		\put(130,100){$\bullet$}
		\put(150,100){$\bullet$}
		\put(170,100){$\bullet$}
		\put(190,100){$\bullet$}
		\put(210,100){$\bullet$}
		\put(230,100){$\bullet$}
		\put(250,100){$\bullet$}
		\put(21,20){$-3$}
		\put(41,20){$-2$}
		\put(61,20){$-1$}
		\put(90,20){$0$}
		\put(110,20){$1$}
		\put(130,20){$2$}
		\put(150,20){$3$}
		\put(170,20){$4$}
		\put(190,20){$5$}
		\put(210,20){$6$}
		\put(230,20){$7$}
		\put(250,20){$8$}
		\put(60,43){\line(0,1){30}}
		\put(140,43){\line(0,1){30}}
		\put(220,43){\line(0,1){30}}
		\put(20,73){\line(0,1){30}}
		\put(100,73){\line(0,1){30}}
		\put(180,73){\line(0,1){30}}
		\put(260,73){\line(0,1){30}}
		\put(20,43){\line(1,0){240}}
		\put(20,73){\line(1,0){240}}
		\put(20,103){\line(1,0){240}}
		\put(270,40){$n=0$}
		\put(270,70){$n=1$}
		\put(270,100){$n=2$}
		\put(85,55){$m=0$}
		\put(165,55){$m=2$}
		\put(43,85){$m=-1$}
		\put(128,85){$m=1$}
		\put(205,85){$m=3$}
		\end{picture}
		\caption{Picture of the block construction.}
	\end{center}
\end{figure}
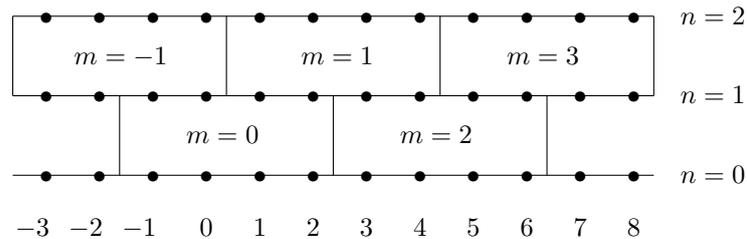

In order to compare with oriented site percolation on ${\cal L} = \{ (m,n) : m+n \hbox{ is even }\}$, we need an upper bound on the critical value. 
A simple contour argument, see e.g., Section 4 in \cite{StFl}, shows that when sites are open with probability $p > 80/81$ there is positive probability of percolation. Using a result of Balister, Bollob\'as, and Stacy \cite{BBS} gives an upper bound $p_c < 0.726$.
\xyz{Returning to the SCP, we can show}

\begin{lemma} \label{star}
	Suppose $\lambda>40\sqrt{\mu}$ and $\mu<1/1600$.
	If $(m,n)$ is wet then $(m-1,n+1)$ and $(m+1,n+1)$ will be wet with probability $> 0.726$.
\end{lemma}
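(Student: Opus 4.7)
Proof plan. By symmetry, I will assume the $AB$ that makes $(m,n)$ wet sits at $y = 2m$ at time $nT$, and will work only with Poisson arrows and death marks attached to sites in $[2m-1,2m+2]$ during the window $[nT,(n+1)T]$, which keeps events for distinct boxes independent. Since $y \in [2m-1,2m+2]$, it is enough to produce an $AB$ at $2m-1$ (event $E_L$, making $(m-1,n+1)$ wet) and an $AB$ at $2m+2$ (event $E_R$, making $(m+1,n+1)$ wet) at time $(n+1)T$. The arrows used in constructing $E_R$ all sit on $\{2m,2m+1,2m+2\}$ and those in $E_L$ on $\{2m-1,2m\}$, so the two events are independent apart from a shared history at $y$ that is positively associated. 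Hence it suffices to prove $\P(E_R) \ge q$ with $q^2 > 0.726$, i.e.\ $q \ge 0.853$.

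The mechanism I plan to exploit is that, although $\lambda$ is small, the effective ``$AB\to AB$'' birth rate from $y$ to a neighbor $y'$ is of order $\lambda^2$ rather than $\lambda$. Conditional on the source $AB$ at $y$ still being alive, the rate at which an $A$-arrow from $y$ to $y'$ fires and is followed, before the freshly placed $A$ at $y'$ dies or the source loses a species, by a $B$-arrow from $y$ to $y'$ is at least
\[
2\cdot \frac{\lambda}{2} \cdot \frac{\lambda/2}{\lambda/2+1+2\mu} \;\geq\; \frac{\lambda^2}{7},
\]
using $\lambda,\mu \le 1$. Under the hypothesis $\lambda^2 \ge 1600\mu$ this is at least $\sim 228\mu$, dwarfing the source's disintegration rate $2\mu$ by a factor of around $100$. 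Thus within the typical source lifetime $1/(2\mu)$, the expected number of $AB$-formations at $y+1$ is $\gtrsim 100$, and a standard race-condition argument gives an $AB$ at $y+1$ in time $\ll 1/\mu$ with probability essentially $1$. Applying the same argument once more propagates an $AB$ to $y+2 = 2m+2$.

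For the endpoint time I would choose $T$ of order $1/\mu$, large compared with the two-step propagation times above but small enough that the SCP restricted to the four-site window does not drift to extinction. Once $AB$'s are present at several sites of $\{2m-1,2m,2m+1,2m+2\}$ (achieved with high probability by the propagation argument and its mirror image), the mutual regeneration of neighboring $AB$'s, each with effective reproduction rate $\gtrsim \lambda^2/7$ against death rate $2\mu$, keeps the window in a dense state until time $T$. Together with the initial propagation this yields $\P(E_R), \P(E_L) \ge 0.853$, and the lemma follows.

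The hard part will be the quantitative persistence step: producing an $AB$ at $2m+2$ at some intermediate time is easy, but controlling the probability that site $2m+2$ is occupied by an $AB$ exactly at the endpoint time $(n+1)T$ is more delicate. A workable route is a regeneration argument based on the event ``there are at least two $AB$'s in $[2m-1,2m+2]$,'' whose recurrence interval has length $O(1/\mu)$ and whose per-cycle failure probability is very small thanks to the ratio $\lambda^2/\mu \approx 1600$. The generous hypotheses $\lambda \ge 40\sqrt{\mu}$ and $\mu < 1/1600$ leave enough slack for these crude bounds to compose to the required $q \ge 0.853$.
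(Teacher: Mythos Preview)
Your plan identifies the right mechanism --- the effective $AB\to AB$ propagation rate is of order $\lambda^2$, which dominates the $AB$ disintegration rate $2\mu$ --- but you have missed the simplification that makes the lemma routine, and this is precisely why the persistence step you flag as ``the hard part'' appears at all.

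The paper's device is to first work with $\mu=0$ and then condition on the event that \emph{no rate-$\mu$ death mark falls anywhere in the four-site box during $[nT,(n+1)T]$}. On that event the process behaves exactly as if $\mu=0$: once an $AB$ is formed at a site, it is absorbing there and persists to time $(n+1)T$ automatically. So the only thing to estimate is the time to \emph{first} create an $AB$ at each target site, with the source at $y$ guaranteed immortal. The paper does this cleanly via the elementary fact that a geometric sum of i.i.d.\ exponentials is again exponential: the waiting time for a neighbor of $y$ to reach state $AB$ decomposes as $T^1_{AB}+T^2_{AB}$, each exponential, with total mean $(3\lambda+2)/\lambda^2$. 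Taking $T=9(3\lambda+2)/\lambda^2$ gives one-step and two-step failure probabilities at most $2e^{-9}$ and $4e^{-9/2}$. Finally $\mu T\asymp \mu/\lambda^2<1/1600$, so the conditioning event itself has probability $e^{-O(\mu T)}$ close to~$1$, and a single union bound over the three bad events yields the $0.726$.

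In other words, you should choose $T$ of order $1/\lambda^2$ with a moderate constant --- this is still of order $1/\mu$, but with a constant small enough that $\mu T\ll 1$ --- rather than a generic $T\sim 1/\mu$. Then with high probability there are \emph{no} $\mu$-deaths to recover from, and your regeneration argument is unnecessary. Your sketch of that argument is plausible in outline but not carried out, and since it is the self-identified crux of your plan, the proposal as it stands is incomplete. The paper's conditioning trick dissolves exactly this difficulty; it also obviates your FKG factorization of $E_L$ and $E_R$, since a plain union bound suffices once all three failure probabilities are individually tiny.
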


We first compute for the process with $\mu=0$. 	Without loss of generality suppose $m=0$ and $n=0$ and that the $AB$ is at 0. The state of site 1 will go from 
\begin{align*}
0 \to A \hbox{ or } B &\quad\hbox{at rate } \lambda \\
A \hbox{ or } B \to 0 &\quad\hbox{at rate } 1 \\
A \hbox{ or } B \to AB &\quad\hbox{at rate } \lambda/2
\end{align*}
Let $N$ be the number of transitions  $0 \to A \hbox{ or } B$ before we arrive at $AB$. Since this is the \xyz{first} time the third transition occurs before the second one, it is clear that $N$ has a geometric distribution with success probability $(\lambda/2)/(1 + \lambda/2)$, that is, 
$$
\P(N=k) = \left(\frac{2}{\lambda +2} \right)^{k-1}  \frac{\lambda}{\lambda + 2} 
$$
The time $t^1$ for a transition $0 \to A \hbox{ or } B$ is exponential with rate $r_1 = \lambda$. The time $t^2$ for $A \hbox{ or } B \to 0 \hbox{ or } AB$ 
is exponential with rate $r_2 = 1 + (\lambda/2)$. The means are  $\E t^1 = 1/\lambda$ and $\E t^2=(1 + \lambda/2)^{-1} = O(1)$ respectively. As $\lambda \to 0$, the second will be much smaller than the first. The total waiting time is $(t^1_1+\cdots+t^1_N)+(t^2_1+\cdots+t^2_N)$.
The following well-known result shows each sum has an exponential distribution. We include its simple proof for completeness.

\begin{lemma} \label{sumgeom}
	If $N$ is geometric with success probability $p$, i.e., $P(N=k) = p (1-p)^{k-1}$, and $X_1, X_2, \ldots$ are an independent i.i.d.~sequence 
	with an exponential distribution with rate $r$ then $S_N = X_1 + \cdots + X_N$ is exponential with rate $pr$.
\end{lemma}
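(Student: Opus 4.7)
The plan is to compute the Laplace transform of $S_N$ directly by conditioning on $N$ and recognizing the resulting series as a geometric series that simplifies to the Laplace transform of an exponential with rate $pr$.

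First I would recall that for $X$ exponential with rate $r$ we have $\E[e^{-sX}] = r/(r+s)$ for $s \ge 0$, and by independence $\E[e^{-sS_k}] = (r/(r+s))^k$. Then I would condition on $N$:
\begin{equation*}
\E[e^{-sS_N}] = \sum_{k=1}^\infty p(1-p)^{k-1} \left(\frac{r}{r+s}\right)^k.
\end{equation*}
Factoring out $p \cdot r/(r+s)$ leaves a geometric series with ratio $(1-p)r/(r+s) < 1$, which sums to $(r+s)/(pr+s)$. Multiplying back yields
\begin{equation*}
\E[e^{-sS_N}] = \frac{pr}{pr+s},
\end{equation*}
the Laplace transform of an exponential with rate $pr$. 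Uniqueness of Laplace transforms then gives the claim.

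A more conceptual alternative, if a one-line proof is preferred, is to invoke thinning of Poisson processes: let $\{T_i\}$ be the arrival times of a rate-$r$ Poisson process, so that $T_i - T_{i-1} = X_i$ are i.i.d.\ exponential with rate $r$; independently mark each arrival as a ``success'' with probability $p$. Then the index $N$ of the first success is geometric with parameter $p$, and $T_N = S_N$ is the first arrival of the thinned Poisson process, which has rate $pr$. Hence $S_N$ is exponential with rate $pr$. I expect the Laplace-transform version to be cleaner for insertion into the paper since it uses no external machinery; the main (very minor) obstacle is just keeping track of the indexing so that the geometric series starts at $k=1$ rather than $k=0$.
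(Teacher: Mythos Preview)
Your main proof is correct and is essentially identical to the paper's: both condition on $N$, use the Laplace transform $r/(r+s)$ of an exponential, sum the resulting geometric series, and identify $pr/(pr+s)$ as the Laplace transform of an exponential with rate $pr$. The Poisson-thinning alternative you mention is a nice extra but is not the route the paper takes.
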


\begin{proof}
	$E\exp(\theta X_i) = r/(r+\theta)$ so conditioning on the value of $N$
	\begin{align*}
	E\exp(\theta S_N) & = p \sum_{k=1}^\infty (1-p)^{k-1} \left( \frac{r}{r+\theta} \right)^k \\
	& = \frac{ p r/(r+\theta) } { 1-  (1-p)r/(r+\theta) } \\
	& = \frac{pr}{r+\theta - (1-p)r } = \frac{pr}{\theta+pr}
	\end{align*}
	which is the Laplace transform of an exponential with rate $rp$ proving the desired result.
\end{proof}

Let $T^1_{AB} = t^1_1 + \cdots + t^1_N$ and $T^2_{AB} = t^2_1 + \cdots + t^2_N$. Using Lemma \ref{sumgeom} we see that $T^1_{AB}$ has an exponential distribution with rate 
$$
pr_1 = \lambda^2/(\lambda+2) \qquad\hbox{and mean }(\lambda+2)/\lambda^2.
$$ 
$T^2_{AB}$ has an exponential distribution with rate 
$$
pr_2=\lambda/2\qquad\hbox{and mean }2/\lambda.
$$
Let $c$ be a constant to be chosen later and  let
$$
T = c(\E T^1_{AB}+\E T^2_{AB} )= c\left[\frac{\lambda+2}{\lambda^2}+ \frac{2}{\lambda}\right] = c\frac{3\lambda+2}{\lambda^2}
$$ 
If we let $S^1_{-1}$ the time to produce an $AB$ at $-1$  then by Lemma \ref{sumgeom} we have
\beq
\P(S^1_{-1} > T) \leq \P(T^1_{AB}>c\E T_1^{AB})+ \P(T^2_{AB}>c\E T_2^{AB}) \leq 2e^{-c}
\label{ubc1}
\eeq
If we let  $S^1_{1}$ the time to produce an $AB$ at $1$ and $S^1_2$ be the additional time to produce an $AB$ at 2 then
\beq
\P(S^1_1+S^1_2> T) \le 2 \P( S^1_1 \ge T/2 )  \leq 4e^{-c/2}
\label{ybc2}
\eeq

To return to the situation where $\mu>0$ we note that the probability there is no $\mu$-death on $\{-1,0,1,2\}$ during $[0,T]$ is $e^{-4\mu T}$.
If we take $\mu = b/T$ then the probability of a $\mu$-death in the box is $1-e^{-4b}$. Thus, \xyz{to prove Lemma \ref{star}} we want to pick $c$ and $b$ so that 
\beq
2e^{-c} + 4e^{-c/2} + 1 - e^{-4b} < {1-0.726=0.274}
\label{ubgoal}
\eeq
To do this we first decide to take $c=9$ so that $2e^{-c} <  2.4682 \times 10^{-4}$, $4e^{-c/2} < 0.0444359$. This means we need 
$1- e^{-4b} \le 0.229317$. This holds if we take $b < b_0 = (1/4)\log(1/0.77069) = 0.028281$. Thus we have survival if
$$
\mu = \frac{b}{T } \le b_0 \cdot \frac{\lambda^2}{9(3\lambda + 2)} \quad\hbox{or}\quad \lambda^2 \ge \mu \frac{9(3\lambda+2)}{b+0}
$$
Hence by monotonicity of the survival probability we will have survival if
$$
\lambda^2 \ge \mu \frac{9(3\lambda+2)}{b_0}
$$
and $45\mu/b_0<1$ which holds if $\mu<1600$.
If $40\sqrt{\mu}>1$, then we can simply bound $\lambda_c(\mu)$ by $\lambda_c(1)$, the critical value for ordinary contact process on $\ZZ^1$.

\subsection{Lower bound on $\lambda_c$}

For some $0<\delta<1$ define $M_t=(AB)_t+\delta(A_t+B_t)$. Here $(AB)_t$, $A_t$ and $B_t$ are the number of sites in states $AB$, $A$, and $B$ at time $t$.
The idea is to show for certain values of $\lambda$, it's possible to choose the $\delta$ so that $M_t$ is a supermartingale. Since $M_t\ge 0$, it converges to a finite limit, which must be identically 0 since $M_t$ has values in a discrete set of values. This then implies both species have to die out with probability 1.
Changes in $M_t$ results from the following

\begin{itemize}
	\item An $AB$ becomes $A$ or $B$. This decreases $M_t$ by $1-\delta$ and the total rate is $2\mu (AB)_t$.
	\item $AB$ or $A$ or $B$ gives birth creating a new $AB$. This increases $M_t$ by $1-\delta$ and the total rate is $\leq 
	\lambda(A_t+B_t)$ (note an $A$ or $B$ can become an $AB$ at rate at most $\lambda$).
	\item An $AB$ gives birth so that a new $A$ or $B$ is created. This increases $M_t$ by $\delta$ and the total rate is $\leq 2\lambda (AB)_t.$
	\item An $A$ or $B$ dies. This decreases $M_t$ by $\delta$ and the total rate is $A_t+B_t$.
	\item An $A$ or $B$ creates a new $A$ or $B$. This increases $M_t$ by $\delta$. The total rate is $\leq\lambda(A_t+B_t).$
\end{itemize}

Based on the items in the previous list, we can conclude that if $F_t=\sigma\{A_s, B_s, (AB)_s, s\leq t\}$.
\begin{align*}
\frac{d}{dt}\E(M_t|F_t) &\leq -2\mu (AB)_t (1-\delta)+\lambda (A_t+B_t)(1-\delta)\\
& +2\delta\lambda (AB)_t-\delta(A_t+B_t)+\lambda (A_t+B_t)\\
&=[-2\mu(1-\delta)+2\delta\lambda](AB)_t+(\lambda(1-\delta
)-\delta+\lambda)(A_t+B_t),
\end{align*}

To get a supermartingale we fix $\lambda$ and $\mu$ and pick $\delta$ so that
$$
-2\mu(1-\delta)+2\delta\lambda< 0 \quad\hbox{and}\quad \lambda(1-\delta)-\delta+\lambda < 0.
$$
For this to hold we need
$$
\frac{\mu}{\mu +\lambda} > \delta > \frac{2\lambda}{\lambda+1}
$$
For this to be possible we need $\mu(\lambda+1) > 2\lambda(\mu+\lambda)$. Rearranging we want $0 > - \mu + \lambda\mu + 2\lambda^2$. 
Using the quadratic equation we find that a $\delta$ exists if and only if 
$$
\lambda \leq \frac{-\mu+\sqrt{\mu^2+8\mu}}{4}.
$$
This implies that $\lambda_c \ge \sqrt{\mu/2} - \mu/4$.

\subsection{Improved lower bound in $d=1$}

\xyz{
	Consider the symbiotic biased voter model (SBVM) starting with $AB$'s at all integers $n \le 0$. To be precise, the birth rates for $A$'s and $B$'s remains the same, however, in this model only the rightmost $A$ or $B$ is allowed to die. Let $r_A(t)$ be the right most $A$ and $r_B(t)$ the right most $B$. Each extreme particle dies at rate $\mu$ if the state there is $AB$ or at rate 1 if the site is singly occupied. For example, in the realization drawn in Figure 4, he $B$ at $r_B(t)$ dies at rate 1, while the $A$ at $r_A(t)$ dies at rate $\mu$. 
}

\begin{figure}[ht]
	\begin{center}
		\begin{picture}(200,80)
		\put(30,10){$A$}
		\put(45,10){$A$}
		\put(60,10){$A$}
		\put(75,10){$A$}
		\put(90,10){$A$}
		\put(105,10){$A$}
		\put(100,60){$r_A(t)$}
		\put(108,40){$\downarrow$}
		\put(30,25){$B$}
		\put(45,25){$B$}
		\put(60,25){$B$}
		\put(75,25){$B$}
		\put(90,25){$B$}
		\put(105,25){$B$}
		\put(120,25){$B$}
		\put(135,25){$B$}
		\put(150,25){$B$}
		\put(165,25){$B$}
		\put(160,60){$r_B(t)$}
		\put(168,40){$\downarrow$}
		\end{picture}
	\end{center}
	\caption{An example illustrating the defintions.}
\end{figure}
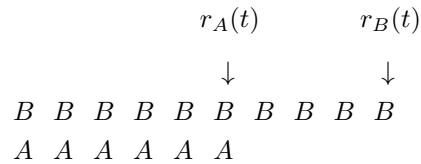

\xyz{Let $r(t)=\max\{r_A(t), r_B(t)\}$. The existence of an edge speed 
	$$
	\lim_{t\to\infty} r(t)/t) = \alpha(\lambda)
	$$
	can be proved using the subadditive ergodic theorem. Using arguments in \cite{D80} for the one-dimensional contact process (see also \cite{D84} for the case of oriented percolation) we can also show that if we define 
	$$
	\lambda_{edge}^{SBVM} = \inf\{ \lambda : \alpha(\lambda) > 0\},
	$$
	then the probability of survival for SBVM, starting from origin occupied by $AB$ and other sites vacant, is 0 for $\lambda < \lambda^{SBVM}_{edge}$. One can construct the SBVM on the same graphical representation of SCP by ignoring 
	deaths that do not effect the right-most $A$ and $B$. It follows from the coupling and the results quoted above that if $\lambda < \lambda_{edge}^{SBVM}$ then 
	the SCP dies out. From this it follows that $\lambda_{edge}^{SBVM}\leq \lambda_e^{SCP}$, the critical value for survival from a finite set defined in Section 1.3.}

If $r_A(t) < r_B(t)$ then the right-most $A$ and the right most $B$ give birth at rate $\lambda/2$. Let $N_t=|r_A(t)-r_B(t)|$. If $N_t = n>0$ and $r_A(t) < r_B(t)$ then the $A$ at $r_A(t)$ dies at rate 1, and the $B$ at $r_B(t)$ dies at rate $\mu$.
Let $q$ be the Q-matrix for $N_t$.

\begin{itemize}
	\item 
	If $n \ge 1$ then $q(n,n+1)=\mu +\lambda/2$ due to birth of an $A$ or death of a $B$; \hbr $q(n,n-1)=1+ \lambda/2$ due to death of an $A$ or birth of a $B$. 
	\item
	If $n=0$ then $q(0,1) = \lambda + 2\mu$ due to a birth or death of an $A$ or a $B$.
\end{itemize}

To compute the stationary distribution set $\mu(1)=c$. If $n > 1$
$$
\pi(n) \left(1+\frac{\lambda}{2}\right) = \pi(n-1) \left(\frac{\lambda}{2}+\mu \right)
$$
so we have
\beq
\pi(n) = c \left( \frac{\lambda+2\mu}{\lambda+2} \right)^{n-1}
\label{pi1}
\eeq
To compute $\pi(0)$ we note that
\beq
\pi(0)(\lambda + 2\mu) = c \left(1+\frac{\lambda}{2}\right)
\label{pi2}
\eeq
Using \eqref{pi1} and \eqref{pi2} then summing we have
\begin{align*}
\sum_{n=0}^\infty \pi(n) & = \frac{c ( 1+\lambda/2)}{\lambda + 2\mu} + \frac{ c }{ 1 - (\lambda+ 2\mu)/(\lambda+2) } \\
& = c \left[ \frac{1+\lambda/2}{\lambda+2\mu} + \frac{\lambda+2}{2(1-\mu)} \right]
\end{align*}
From this and our bullet list of flip rates, it follows that in equilibrium
\begin{align*}
\frac{d}{dt} \E r(t) & = c \left[ \frac{1+\lambda/2}{\lambda+2\mu} (\lambda-2\mu) 
+ \frac{\lambda+2}{2(1-\mu)} (\lambda/2-1) \right] \\ 
& = c \left(1+\frac{\lambda}{2}\right)\left[ \frac{\lambda-2\mu}{\lambda+2\mu}  + \frac{\lambda/2-1}{1-\mu}  \right]
\end{align*}
To find when this is positive we set
$$
(\lambda-2\mu)(1-\mu) + (\lambda+ 2\mu)(\lambda/2-1) = 0
$$
A little algebra converts this into
\begin{align*}
0 &= 2\mu^2 -(\lambda+2) \mu + \lambda + \mu(\lambda-2)  + \lambda^2/2 - \lambda \\
&= 2\mu^2 - 4\mu + \lambda^2/2
\end{align*}
so for the SBVM the critical value $\lambda_c^{SBVM} =\sqrt{8\mu-4\mu^2}$. As remarked \xyz{above this} is a lower bound on $\lambda_c(\mu)$ for SCP. 

\clearp

\section{Proof of Lemma \ref{block_new}} \label{sec:pfL1}

At several points in the proof we will use Harris' result on ``positive correlations.'' See Theorem B17 on page 9 of \cite{TL}.
If $\zeta_t$ is an attractive particle system starting from a deterministic initial condition and $\mu$ is the distribution at time $t$ then for any increasing functions $f$ and $g$
\beq
\int f g \, d\mu \ge \int f \, d\mu \cdot \int g \, d\mu 
\label{pc}
\eeq
In most of our applications $f=1_A$ and $g=1_B$ will be indicator functions.

We begin our investigation by recalling some facts about the subcritical $d$-dimensional contact process.  
Let $\lambda$ be the total birth rate from an occupied site and let the death rate be 1. We suppose that in the initial condition
only the origin is occupied and denote the process by $\eta^0_t$. Combining Theorems 2.34 and 2.48 of \cite{TL} shows 
that if $\lambda < \lambda_c$ then there exists some $\kappa(\lambda)<1$ so that 

\begin{equation}\label{expdecay}
\mathbb{E}(\abs{\eta_t}) \leq C\kappa(\lambda)^t,
\end{equation}
for all $t$. 
By increasing the value of $\kappa(\lambda)$ if necessary we also have
\begin{equation}\label{expdecay2}
\P(\eta^{0}_t  \mbox{ ever reaches }\partial B_1(0,s) \mbox{ for some $t$})\leq C\kappa(\lambda)^s,
\end{equation}
where $\partial B_1(0,s)$ is the boundary of the $L^1$ ball of radius $s$ in $\ZZ^d$ centered at 0. 
Here $L^1$ means $dist(x,y)=\sum_{i=1}^d \abs{x_i-y_i}$ for $x=(x_1,\ldots, x_d)$ and $y=(y_1, \ldots, y_d)$. 

Let $a$ be chosen so that
\begin{equation}\label{acond}
(\kappa(\lambda))^a  e^8 (1-\exp(-\frac{\lambda}{2d}))^{-4} < 1.
\end{equation}
The reason we choose an $a$ that satisfies \xyz{the} above will become clear in the proof of Lemma \ref{ABinsides}.

Recall that $R_{L_j,T_j} = \{ AB(H_{L_j,T_j}) \le M, AB(S_{T_j,L_j}) \le {N} \}$.
Define $D_{L_j,T_j,l}$ to be the event that there are fewer than $l$ arrows with one side occupied by $A$ or $B$ or $AB$ escaping from the side of space time box, i.e., $\sigma_{L_j,T_j}$. Define $E_{L_j,T_j,l}$ to be the event that there are fewer than $l$ sites \xyz{in state $A$ or $B$ or $AB$} on the top,
$W(L)\times \{1.01T_j\}$.

\begin{lemma}\label{noflee}
	Let $M$ and $N$ be fixed. For any $T_j, L_j \to \infty$ such that $\lim_{j\to \infty} T_j (\kappa(\lambda))^{aL_j}L_j^{d-1}=0$ 
	and $\lim\limits_{j\to \infty}\frac{T_j}{\log L_j}=\infty$  
	we have
	\begin{align}
	\lim_{l\to \infty} \limsup_{j\to \infty}\P(R_{L_j,T_j}\cap D^c_{L_j,T_j,l}) =0
	\label{dlim} \\
	\lim_{l\to \infty} \limsup_{j\to \infty}\P(R_{L_j,T_j}\cap E^c_{L_j,T_j,l})=0.
	\label{elim}
	\end{align}
\end{lemma}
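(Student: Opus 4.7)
The intuition is that on event $R$ the supply of $AB$-events in the slabs is limited, and single-species particles cannot sustain a large population because the one-type contact process is subcritical ($\lambda<\lambda_c(1)$). The plan is to dominate the set of $A$-occupied sites by a base subcritical contact process plus a controlled family of ``injection-spawned'' subcritical CPs, and then bound expected masses via Green's function estimates. Concretely, let $\eta$ be the contact process driven by the $A$-arrows and by the superposed death clock $\hat D^{\cdot,A,1}$ of $D^{\cdot,A,1}$ and $D^{\cdot,A,2}$, started from $[-n,n]^d$; since $\lambda<\lambda_c(1)$, $\eta$ is subcritical and satisfies \eqref{expdecay}--\eqref{expdecay2}. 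Call $(x,s)$ an \emph{injection} if $D^{x,A,2}$ fires at time $s$ while $\xi_{s^-}(x)=AB$, and let $\mathcal I$ denote the random set of injections. For each $(x,s)\in\mathcal I$ run a further subcritical CP $\eta^{(x,s)}$ started from a single particle at $(x,s)$ on the same graphical representation. A straightforward induction on the successive events of the graphical construction shows
\[
\{y:\xi_t(y)\ni A\}\;\subseteq\;\eta_t\cup\bigcup_{(x,s)\in\mathcal I,\,s\le t}\eta^{(x,s)}_t,
\]
the only nontrivial case being that a firing of $D^{x,A,2}$ at an $AB$-site kills the $A$ in every CP on the right while the actual process retains it; the injection at $(x,s)$ restores $x$. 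A symmetric decomposition works for the $B$-process. Setting this up is the main obstacle: the actual $A$-process is not dominated by any single subcritical contact process (the $\mu$-reduced death rate at $AB$-sites can locally sustain $A$), so one must convert each ``extra survival'' into a discrete injection that can be charged against the $AB$-budget imposed by $R$.

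\emph{Proof of \eqref{dlim}.} The expected number of arrows escaping $\sigma_{L_j,T_j}$ with occupied source is bounded, up to a factor $C(\lambda,d)$, by the expected space-time mass of sites in state $A$, $B$, or $AB$ on $\sigma_{L_j,T_j}$. The $AB$-contribution is $O(N)$ on $R$. For the $A$-contribution apply the decomposition together with the Green's function estimate $G(x,y)=\int_0^\infty\P(y\in\eta^x_t)\,dt\le C\kappa^{|y-x|_1}$, a consequence of \eqref{expdecay2} and subadditivity. The base process $\eta$ contributes $\sum_{y\in\partial W}G([-n,n]^d,y)=O(\kappa^{aL_j})$; injections with $x\in S_{L_j,T_j}$ contribute $O(N)$ on $R$, since $AB(S_{L_j,T_j})\le N$ bounds the total $AB$-occupation time there and each spawn is a subcritical CP of bounded expected total mass; and injections with $x$ in the central cube $[-hL_j,hL_j]^d$ contribute at most
\[
(1-\mu)\cdot 1.1T_j\cdot\sum_{x\in[-hL_j,hL_j]^d}\sum_{y\in\partial W}C\kappa^{|y-x|_1}\;\le\;CT_jL_j^{d-1}\kappa^{aL_j},
\]
which vanishes by the hypothesis $T_jL_j^{d-1}\kappa^{aL_j}\to 0$. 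Together with the symmetric $B$-bound this gives $\E[\mathbf 1_R\cdot(\text{escapes})]\le C=C(M,N,\lambda,\mu,d)$, and Markov's inequality yields \eqref{dlim}.

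\emph{Proof of \eqref{elim}.} On $R$ the constraint $AB(H_{L_j,T_j})\le M$ forces at most $M$ $AB$-sites in $W$ at any single time in $[T_j,1.01T_j]$. For $A$-sites at the top I again use the decomposition and split injections by time. Injections with $s\in[T_j,1.01T_j]$ happen at rate at most $(1-\mu)M$ on $R$, and each contributes in expectation at most $C\kappa^{1.01T_j-s}$ to the size at time $1.01T_j$, integrating to $O(M)$. Injections with $s\in[0,T_j]$ are controlled only by the trivial rate bound $(1-\mu)L_j^d$, but each has decayed by at least $C\kappa^{0.01T_j}$ at the top, producing $O(L_j^d\kappa^{0.01T_j})\to 0$ by the hypothesis $T_j/\log L_j\to\infty$. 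The contribution of $\eta$ itself at $1.01T_j$ is $O(\kappa^{T_j})\to 0$. Combining with the symmetric $B$-bound gives $\E[\mathbf 1_R\cdot(\text{top count})]=O(M)$, and Markov's inequality concludes.
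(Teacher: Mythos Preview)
Your argument is correct and rests on the same two ingredients as the paper---exponential decay of the one-type subcritical contact process and the $AB$-budget imposed by $R$---but packages them differently. The paper localizes to the side slab: every particle on $\sigma$ has an ancestor on the inner face $\rho$ whose infection path stays in the slab, and this path either ``travels alone'' (never meets the other species) or it does not. Alone paths are handled by a union bound over the $O(T_jL_j^{d-1})$ potential ancestors on $\rho$, each surviving across distance $aL_j$ with probability $\le C\kappa^{aL_j}$; non-alone paths are charged to the $\le N$ $AB$-events in $S$. Your global decomposition instead writes the entire $A$-occupied set as a base subcritical CP plus injection-spawned copies at every ``saved death'', then controls expectations via Green's-function bounds and Markov's inequality; the geometry enters through $G(x,y)\le C\kappa^{|y-x|_1}$ for $x$ in the central cube and $y\in\partial W$, giving the same $T_jL_j^{d-1}\kappa^{aL_j}$ factor. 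Your version is more systematic and makes the role of the hypothesis $T_j/\log L_j\to\infty$ in \eqref{elim} explicit, whereas the paper simply says the proof is ``similar''.

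One technical point you pass over: the step ``injections with $x\in S$ contribute $O(N)$ on $R$'' requires decoupling $\mathbf 1_R$ from the random sum $\sum Z^x_s\,m(x,s)$, since $R$, the injection indicators, and the spawned masses all live on the same graphical representation. This is routine: replace $\mathbf 1_R$ by $\mathbf 1\{\tau>1.1T_j\}$ where $\tau$ is the first time the cumulative $AB$-occupation in $S$ exceeds $N$; then use that $g(s):=\mathbf 1\{\xi_{s^-}(x)=AB,\,s<\tau\}$ is predictable while $m(x,s)$ depends only on the graphical representation on $(s,\infty)$, so the Mecke/compensator identity gives $\E\big[\sum g(s)m(x,s)\big]=(1-\mu)\int\E[g(s)]\,\E[m(x,s)]\,ds\le C(1-\mu)N$. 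The analogous stopping time with threshold $M$ handles the slab-time injections in \eqref{elim}.
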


\begin{proof}
	\xyz{To begin we consider equation \eqref{dlim}. 
		Observe that for any $A$ or $B$ reaching the outer boundary $\sigma_{L_j,T_j}$, that particle must have an ancestor 
		on the inner boundary $\rho_{L_j,T_j}$ s.t. the infection path
		from the ancestor to the descendant completely lies in the side slab
		$S_{T_j,L_j}$. There are two possibilities for this infection path. Either it meets a particle of the other species during the journey or not. We say the descendants of an $A$ travel alone if they never share a site with a $B$ individual.} \xyz{An individual $A$ on $\rho_{L_j,T_j}$  has probability of less than $C(\kappa(\lambda))^{aL_j}$ to reach $\sigma_{L_j,T_j}$ if they travel alone and the infection path stays in the side slab. Note that due to their definitions $A(\rho_{L_j,T_j})$ and $B(\rho_{L_j,T_j})$ are bounded by $CT_jL_j^{d-1}$.  Since we assume  
		$$ 
		\lim_{j\to \infty} T_j(\kappa(\lambda))^{aL_j}L_j^{d-1}=0,
		$$ 
		by union bound the probability that there exists one $A$ or $B$ on $\rho_{L_j,T_j}$ that make it to $\sigma_{L_j,T_j}$
		by traveling alone in the side slab
		is bounded by $C'T_jL_j^{d-1} (\kappa(\lambda))^{aL_j}$, which goes to 0 as $j\to \infty$ by assumption. Since $N$ is fixed finite number on the event $R_{L_j,T_j}$ the number of particles in the side slab that do not travel alone in the side slab is at most $C''N$ with high probability.
		Combining the two types of possibilities for spreading (traveling alone or not) we see the number of offspring that can reach the sides
		$\sigma_{L_j,T_j}$
		is also at most $C''N$ with high probability. 
		This proves \eqref{dlim}. The proof of \eqref{elim} is similar.} 
\end{proof}

\begin{proof}[Proof of Lemma \ref{block_new}] For any given $\eta>0$, by Lemma \ref{noflee} we can pick $l$ large enough s.t., 
	$$
	\limsup_{j\to \infty}\P(R_{L_j,T_j})\leq \eta+\limsup_{j\to \infty}\P(R_{L_j,T_j}\cap D_{L_j,T_j,l}\cap {E_{L_j,T_j,l}} ).
	$$
	Each individual dies at rate at least $\mu$, so the probability for it to die in 1 unit of time is at least $1-\exp(-\mu)$. The probability that this individual does not produce a new particle within 1 unit of time is at least $\exp(-\lambda)$. Using positive correlations, we see that on the event $D_{L_j,T_j,l}\cap {E_{L_j,T_j,l}}$, there is always probability 
	$$ 
	\ge [ (1-\exp(-\mu))\exp(-\lambda)   ]^{2l}
	$$
	that all the births escaping from the side are killed and all the sites occupied at top slice recover before giving any birth. 
	If we denote by ${\cal F}_{B(L_j,T_j)}$ the sigma algebra generated by the Poisson process in the space-time box $B(L_j,T_j)$,  
	then on $D_{L_j,T_j,l}\cap E_{L_j,T_j,l}$ we have 
	$$
	\P(\Omega_0 | {\cal F}_{B(L_j,T_j)})\ge [ (1-\exp(-\mu))\exp(-\lambda)   ]^{2l}.
	$$
	\xyz{
		L\/evy's 0-1 law 
		(see e.g. \cite[Theorem 5.5.8]{PTE}) implies that
		$$
		P( \Omega_0|{\cal F}(L_j,T_j)) \to 1_{\Omega_0},
		$$
		which implies on the event $\limsup_{j\to \infty} D_{L_j,T_j,l} \cap E_{L_j,T_j,l}$ we have
		$1_{\Omega_0}=1$.
		It follows from this and the fact  
		$
		\limsup_{j\to \infty}\P(D_{L_j,T_j,l}\cap E_{L_j,T_j,l}) \leq 
		\P(\limsup_{j\to \infty} D_{L_j,T_j,l} \cap D_{L_j,T_j,l})
		$
		that for any $l$,
		$$
		\limsup_{j\to \infty}\P(D_{L_j,T_j,l}\cap E_{L_j,T_j,l}) \leq \P(\Omega_0).
		$$
		Lemma \ref{block_new} follows because $\eta$ can be taken  arbitrarily small.}
\end{proof}

\clearp 

\section{Proof of Lemma \ref{persis_new}} \label{sec:pfL2}

In this section, our goal is to prove

\mn
{\bf Lemma 1.7} {\it For any sequence $L_j\to \infty$, one can choose $T_j=T(L_j)$ that satisfies 
	\begin{equation}\label{tj}
	\lim_{j\to \infty} T_j (\kappa(\lambda))^{aL_j}L_j^{d-1}=0 \mbox{ and } \liminf_{j\to\infty} \frac{T_j}{L_j}>0,
	\end{equation} 
	so that for $j$ large  $\P( AB(S_{L_j,T_j}) \ge N )=1-\sqrt{\P(\Omega_0)}.$}

\mn
Let $J_{L,t} = \{ AB(S_{L_j,t}) \ge N \}$ and let $f(L_j,t) = \P(J_{L_j,t})$. The first step is to show 

\begin{lemma}\label{ABinsides}
	\xyz{Let $L_j$ be any sequence going to $\infty$.}  
	Set $c_3=e^{-8}(1-e^{-\lambda/(2d)})^4$ and define
	$$
	t_j=\left[\frac{1}{c_3(\kappa(\lambda))^a} \right]^{L_j/2},
	$$
	then \begin{equation}\label{noABdeath}
	\lim_{j \to \infty}(f(L_j,t_j)-\P(J_{L_j,t_j}, {}_{W(L)}\xi_{t_j}=\emptyset))=0
	\end{equation}
	and 
	\beq \label{tjub}
	\lim_{j\to \infty} t_j(\kappa(\lambda))^{aL_j}L_j^{d-1}=0.
	\eeq
	
\end{lemma}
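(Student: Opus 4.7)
I will verify the two claims in turn. The bound \eqref{tjub} is immediate: the defining condition \eqref{acond} is equivalent to $c_3^{-1}(\kappa(\lambda))^a<1$, so by the choice of $t_j$,
\[
t_j(\kappa(\lambda))^{aL_j}L_j^{d-1}
=\bigl[c_3^{-1}(\kappa(\lambda))^a\bigr]^{L_j/2}L_j^{d-1}\longrightarrow 0
\]
as $L_j\to\infty$ because the geometric factor overwhelms the polynomial.

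For \eqref{noABdeath} it suffices to show $\P(J_{L_j,t_j}\cap\{{}_W\xi_{t_j}\ne\emptyset\})\to 0$. My plan is to trace each candidate $AB$-occurrence $(x,s)\in S_{L_j,t_j}$ backward in the graphical representation along the $A$-ancestor line toward its origin in $[-n,n]^d$. On any maximal sub-segment of this ancestor path during which the $A$-particle has no $B$-partner at its site, the ancestor behaves as a one-species subcritical contact process (using the superposition death process $\hat D^{x,A,1}$ from Section \ref{sec:bocv}, available since $\lambda<\lambda_c(1)$). By \eqref{expdecay2} any such solo sub-segment that spans spatial distance of order $aL_j$ has probability at most $C(\kappa(\lambda))^{aL_j}$ of existing. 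A union bound over the $O(t_jL_j^{d-1})$ possible $AB$ endpoints in the side slab, combined with \eqref{tjub}, shows that the expected number of $AB$-occurrences attributable to a single long solo sub-segment vanishes as $j\to\infty$; by Markov's inequality, this contribution to $\P(J_{L_j,t_j}\cap\{{}_W\xi_{t_j}\ne\emptyset\})$ also vanishes.

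Contributions from $AB$-occurrences produced via shorter solo sub-segments are handled by a recursion on the first encounter of the $A$- and $B$-ancestor lineages. Each such encounter is itself an earlier $AB$-occurrence, to which the same estimate can be applied. Using positive correlations \eqref{pc} to decouple the joint survival of the two lineages and keeping track of branching, the recursion collapses into a geometric series. The slack built into the constant $c_3=e^{-8}(1-e^{-\lambda/(2d)})^4$ is precisely what forces the common ratio of this series to be less than $1$, so the total contribution is again of order $O(t_jL_j^{d-1}(\kappa(\lambda))^{aL_j})\to 0$.

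The principal obstacle is that the $A$-marginal of the SCP is not itself subcritical: the presence of a $B$-partner at the same site suppresses the $A$-death rate from $1$ to $\mu$, so \eqref{expdecay2} cannot be applied to the entire ancestor path. The argument circumvents this by applying the subcritical estimate only to the solo sub-segments of the ancestor path and absorbing each encounter with $B$ into the combinatorial count. Arranging the argument so that the recursion over encounters converges, rather than compounds, is where the specific form of $c_3$ and the defining inequality \eqref{acond} play an essential role.
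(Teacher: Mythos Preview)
Your verification of \eqref{tjub} is correct.

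For \eqref{noABdeath} you are arguing in the wrong direction. The reduction to $\P(J_{L_j,t_j}\cap\{{}_W\xi_{t_j}\ne\emptyset\})\to0$ is fine, but $J_{L,t}$ is (despite the sign in the displayed definition just above the lemma) the event that the side slab contains \emph{few} $AB$'s: the proof of Lemma \ref{persis_new} uses $f(L_j,0)=1$, $f$ decreasing in $t$, and $\limsup_j f(L_j,t_j)\le\P(\Omega_0)$, all of which force $J_{L,t}=\{AB(S_{L,t})\le N\}$. Thus \eqref{noABdeath} asserts that survival to time $t_j$ implies many $AB$'s in the side slab. Your backward-tracing argument, union-bounding over candidate $AB$-endpoints and arguing each is unlikely, is an attempt to show $AB$'s are \emph{rare} --- the opposite conclusion, and in fact false on the survival event. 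Even on its own terms, the recursion step (``collapses into a geometric series'') is not a proof: you never specify the recursion variable, why successive encounter events are stochastically dominated, or why positive correlations yield an inequality in the direction you need.

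The paper's argument runs forward in time via a dichotomy (Proposition \ref{twoobs}): at any time the restricted process is either in a configuration where every $A$ is at distance $\ge L_j$ from every $B$, in which case the two species evolve as disjoint subcritical contact processes and die within time $L_j$ with probability $1-O(L_j^d\kappa(\lambda)^{L_j})$; or some $A$ is within distance $L_j$ of some $B$, in which case an explicit path construction deposits an $AB$ in the side slab within time $5L_j$ with probability $\ge c_3^{L_j}$. On $\{{}_W\xi_{t_j}\ne\emptyset\}$ one must (with high probability) be in the second scenario at each of the $t_j/(5L_j)$ checkpoints, and the choice of $t_j$ makes the expected number of successes $t_jc_3^{L_j}/(5L_j)\to\infty$; Chebyshev then gives $AB(S_{L_j,t_j})>N$ with high probability. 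In this argument the role of \eqref{acond} is simply $c_3>\kappa(\lambda)^a$, which allows $t_j$ to be chosen large enough for the checkpoint count yet small enough for \eqref{tjub} --- not as the common ratio of a recursion.
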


To prove this we use the following proposition:

\begin{proposition}\label{twoobs}
	For any $t$ and any possible configuration ${}_{W(L_j)} \xi_t$, there are two possibilities
	
	\begin{itemize}
		\item {\bf Scenario 1.} \xyz{
			There is no $AB$ in ${}_{W(L_j)} \xi_t$ and
			for any $x, y\in W(L_j)$ with an $A$  at $x$ and a $B$  at $y$}
		we have $dist(x,y)\geq L_j$. In this case there are constants $c_1,c_2$ so that	
		\begin{equation}\label{obs1}
		\P_{{}_{W(L_j)} \xi_t  }({}_{W(L_j)}\xi_{t+L_j} =\emptyset)\geq  1-c_1((a+h)L_j)^d\exp(-c_2L_j).
		\end{equation}
		
		\item  { \bf Scenario 2.} There are $x, y$ with $dist(x,y)\leq L_j$ with an $A$ or $AB$ at $x$ and a $B$ or $AB$ at $y$. 
		If we let  $c_3=e^{-8}(1-e^{-\lambda/2})^4$ and we let $U_{L_j,t} = [-(a+h)L_j, -hL_j]\times I^{d-1}\times  [t,t+5L_j] \cup [hL_j, (a+h)L_j]\times I^{d-1} \times [t,t+5L_j]$ where $I=[-(a+h)L_j, (a+h)L_j]$
		then
		\beq \label{obs2}
		\P_{{}_{W(L_j)} \xi_t} ({}_{W(L_j)}\xi \mbox{ has an AB in $U_{L_j,t}$ })>c_3^{L_j}
		\eeq
	\end{itemize}
\end{proposition}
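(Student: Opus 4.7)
My plan is to treat the two scenarios separately.

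For Scenario 1, the key observation is that the reduced $\mu$-death rate only applies at doubly-occupied sites. If no $A$ ever visits a site with a $B$ (or vice versa) during $[t, t+L_j]$, then the $A$- and $B$-marginals each evolve as independent subcritical contact processes with birth rate $\lambda < \lambda_c(1)$ and death rate $1$. I would bound the failure probability in two pieces. First, by \eqref{expdecay2} and a union bound over the at most $((a+h)L_j)^d$ initial particles in $W(L_j)$, the probability that any individual $A$- or $B$-cluster ever reaches $L^1$-distance $L_j/2$ from its starting point is bounded by $C((a+h)L_j)^d (\kappa(\lambda))^{L_j/2}$; on the complementary event the $A$'s and $B$'s remain disjoint throughout the window, since by hypothesis they started at distance $\geq L_j$. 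Second, on that complementary event, \eqref{expdecay} and Markov's inequality give that every cluster is empty at time $t+L_j$ with probability at least $1 - C((a+h)L_j)^d (\kappa(\lambda))^{L_j}$. Combining the two yields \eqref{obs1} with $c_2$ a positive multiple of $-\log \kappa(\lambda)$.

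For Scenario 2, I would exhibit an explicit event on the graphical representation, of probability at least $c_3^{L_j}$, that forces an $AB$ to appear inside $U_{L_j,t}$ during $[t, t+5L_j]$. Choose a nearest-neighbor path $x = z_0, z_1, \dots, z_k = y$ of length $k \le L_j$ together with a continuation from $y$ to a point in $U_{L_j,t}$ of length at most $2(a+h)L_j$; the combined path uses $O(L_j)$ edges and fits well within the budget of $5L_j$ unit time intervals. In the $i$th unit window I would demand that (i) the infection arrow advancing the travelling particle by one edge fires, (ii) the death clocks at the trailing occupied sites in the chain behave so as to preserve the particle, and (iii) all competing infection or death events that could corrupt the chain remain silent. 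By independence of disjoint Poisson clocks, each unit window succeeds with some constant probability of the form $e^{-c}(1-e^{-\lambda/(2d)})^{k}$; tightening the bookkeeping of how many clocks must fire versus how many must be suppressed per window recovers the stated $c_3 = e^{-8}(1-e^{-\lambda/2})^4$, and multiplying over the $O(L_j)$ windows produces the $c_3^{L_j}$ lower bound.

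The main obstacle is Scenario 2. One has to design the forcing event so that (a) the moment the travelling $A$ arrives at $y$ an $AB$ is genuinely produced rather than immediately killed; (b) the resulting $AB$ is subsequently steered into $U_{L_j,t}$ without being lost to $\mu$-deaths along the way; and (c) the events used in the different unit windows are measurable with respect to disjoint portions of the Poisson clocks, so that their probabilities actually multiply. Attractiveness of the SCP together with the positive correlation inequality \eqref{pc} should let us reduce the argument to a minimal configuration consisting of just the specified $A$ at $x$ and $B$ at $y$, since extra particles in the ambient state can only help to create an $AB$. The case where one of $x,y$ already carries an $AB$ is handled by skipping the first phase and going directly to the transport phase.
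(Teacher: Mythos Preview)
Your approach is correct and matches the paper's in both scenarios. Two minor points on Scenario~2: the continuation path from the newly formed $AB$ into the slab has length at most $hL_j = L_j/2$ (only the first coordinate needs to move to reach $[hL_j,(a+h)L_j]$ or its reflection), not $2(a+h)L_j$, and this is what makes the total exponent come out as $L_j$ with the stated $c_3$; and the paper justifies multiplying the per-step success probabilities by appealing directly to positive correlations \eqref{pc} for the increasing per-step events, rather than your route of first reducing to the minimal configuration via attractiveness and then invoking independence of disjoint Poisson clocks --- both justifications are valid and yield the same bound.
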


\begin{proof}[Proof of Proposition 1]
	In scenario 1, equation \eqref{expdecay2} implies that a single particle $x$ has probability at most $C\kappa(\lambda)^{L_j/2}$ to ever reach some point outside $B_1(x,L_j/2)$.  Using a union bound, we see that with probability at least $1-C((a+h)L_j)^d \kappa(\lambda)^{L_j}$ none of the particles in the box will move for a distance greater than $L_j/2$. Conditioned on the event that none of the $A$'s and $B$'s meet, they will die out before time $L_j$ with a probability of
	at least $1-C((a+h)L_j)^d\kappa(\lambda)^{L_j}$ by equation \eqref{expdecay}. Using union bound again we get equation \eqref{obs1}. 
	
	In Scenario 2, the result comes from the fact that for any $A$ and $B$ with distance less than $L_j$, the probability that they will meet to form an $AB$ within time of $L_j$ is 
	$$
	\ge (e^{-1} ((1-e^{-\lambda/(2d)} ) e^{-1})^{L_j}. 
	$$
	To prove this, let $z_0=x, z_1, \ldots z_k=y$ be a path with length $\le L_j$ and note that to get from $z_i$ occupied at time $t$ to $z_{i+1}$ occupied at time $t+1$: the particle at $z_i$ has to survive for time 1, give birth onto $z_{i+1}$ and the newborn particle needs to survive until time $t+1$.  
	If the $AB$ is already in $[-(a+h)L_j, -hL_j]\times I^{d-1} \cup [hL_j, (a+h)L_j] \times I^{d-1}$ then we are done. Otherwise we let this $AB$ produce an $AB$ in $[-(a+h)L_j, -hL_j]\times I^{d-1} \cup [hL_j, (a+h)L_j] \times I^{d-1}$ within time $2L_j$ with probability at least 
	$$
	(e^{-2\mu}(1-e^{-\lambda/(2d)})^2 e^{-1})^{L_j}.
	$$ 
	To see this, for any point in $[-(a+h)L_j,(a+h)L_j]$ there is a path of length at most $L_j$ connecting this point to some other point in
	$[-(a+h)L_j, -hL_j]\times I^{d-1} \cup [hL_j, (a+h)L_j] \times I^{d-1}$. \xyz{This time
		the events along the path that result in spreading the infection are: 
		
		(i) The $AB$ at $z_i$ has to survive for time 1 (with probability $e^{-2\mu}$). 
		
		(ii) During this time unit there has to be a birth of an $A$ and a $B$ at $z_{i+1}$ which occurs with probability $\geq (1-e^{-\lambda/(2d)})^2$). 
		
		(iii) The first born particle has to survive until the second birth occurs which occurs with probability $\geq e^{-1}$). 
		
		\noindent
		Then we repeat this process for the newly formed $AB$ until we reach the final point of the path.
		Using positive correlations we prove \eqref{obs2} with $c_3=e^{-8}  (1-e^{-\lambda/(2d)} )^4$ since
		$(e^{-1} ((1-e^{-\lambda/(2d)} ) e^{-1})^{L_j} (e^{-2\mu}(1-e^{-\lambda/(2d)})^2 e^{-1})^{L_j}>c_3^{L_j}$.}
\end{proof}

\begin{proof}[Proof of Lemma \ref{ABinsides}] We divide $[0,t_j]$ into intervals of length $5L_j$.
	Denote by $t_{j,k}$ the resulting division points for $1\leq k\leq t_j/(5L_j)$
	By Proposition \ref{twoobs}, on the event ${}_{W(L_j)}\xi_{t_j} \neq \emptyset$
	if we look at the restricted process ${}_{W(L_j)}\xi_t$ at times $t_{j,,k} $ we need
	to stay in scenario 2 otherwise the restricted process will die with high probability. On the other hand, falling into scenario 2
	implies that we will have a chance of at least $c_3^{L_j}$ to get an $AB$ in $S_{L,T}$. If  $t_j/L_j$ is large enough so that
	\begin{equation}\label{choose tj}
	\lim_{j\to\infty} \frac{t_j \cdot c_3^{L_j}}{L_j} =\infty,
	\end{equation}
	then with high probability the number of $AB$ in $S_{L_j,t_j}$ will grow to $\infty.$ To see this,
	recall that for any collection of independent Bernoulli random variables $ \{I_r\}$ 
	using Chebyshev's inequality with the fact that $\var(I_r) \le \E(I_r)$ we have 
	$$
	\P\left(\sum_{r} I_r \geq \frac{1}{2}\sum_{r} \E(I_r) \right)\geq 1-\frac{4}{\sum_{r} \E(I_r)}.
	$$
	If we choose  
	$
	t_j=[1/(c_3\kappa(\lambda))^a ]^{L_j/2},
	$
	equation \eqref{choose tj} is satisfied since $c_3/\kappa(\lambda)^a>1$, which implies
	$$
	\lim\limits_{j\to \infty}\P({}_{W(L_j)} \xi_{t_j}\neq \emptyset,AB( S_{L_j,t_j})\leq N)=0.
	$$
	This proves \eqref{noABdeath}. Since $\kappa(\lambda)^a/c_3<1$ it is clear from the definition of $t_j$ that \eqref{tjub} holds.
\end{proof}

\begin{proof}[Proof of Lemma \ref{persis_new}]
	It is  clear that $f(L_j,0)=1$ and $f(L_j,t)$ is an decreasing function of $t$. We need to find $T_j$ that satisfies $f(L_j,T_j)=\sqrt{\P(\Omega_0)}$. By comparison with Richardson's model, see \cite{Rich}, it follows that the SCP spreads at most linearly. Hence if $T_j=o(L_j)$ then 
	$\lim_{j \to \infty}f(L_j,T_j)=1$. This implies that if we have
	$$
	f(L_j,T_j)=\sqrt{\P(\Omega_0)}, \mbox{for large } j,
	$$
	then it must be true that $\liminf_{j\to\infty} T_j/L_j >0.$
	
	Lemma \ref{ABinsides} gives us, see \eqref{tjub} and \eqref{noABdeath}, a sequence of $t_j$ that satisfies $\lim_{j\to\infty} t_j (\kappa(\lambda))^{aL_j}=0$ and
	$\limsup_{j\to \infty}f(L_j,t_j) \leq  \P(\Omega_0)$. Since $f(L_j,t)$ is decreasing in $t$ we have proved the existence of $T_j$ with the properties desired in  Lemma \ref{persis_new}. 
\end{proof}

\clearp 	

\section{Proof of Theorem \ref{opstep1}} \label{sec:pfT2}

Using Lemma \ref{block_new}, positive correlations, and Lemma \ref{persis_new} we see that if $j$ is large
\begin{align*}
2\P(\Omega_0) &\ge P(  AB(H_{L,T}) \le  M, AB(S_{T,L}) \le N  ) \\
& \ge \P(  AB(H_{L,T}) \le  M) \P( AB(S_{T,L}) \le N  ) \\
& = \P(  AB(H_{L,T}) \le  M) \sqrt{\P(\Omega_0)}
\end{align*}
Rearranging we have
\beq
\P(  AB(H_{L,T}) >  M) \ge 1 - 2 \sqrt{\P(\Omega_0)}
\label{optop}
\eeq
For the moment we will restrict to $d=1$ and drop the superscript $i$ from the notation for the slabs. 
To bound the other probability we use Lemma \ref{persis_new}, monotonicity, positive correlations, 
and symmetry to conclude  
\begin{align*}
\sqrt{\P(\Omega_0)} = P( AB(S_{T,L}) \le N  )& \ge \P( AB(S^+_{T,L}) \le N/2,  AB(S^-_{T,L}) \le N/2 ) \\
& \ge \P( AB(S^+_{T,L}) \le N/2) \P(  AB(S^-_{T,L}) \le N/2 ) \\
& = \P( AB(S^+_{T,L}) \le N/2)^2
\end{align*}
which gives
$$
\P( AB(S^+_{T,L}) > N/2),  \P( AB(S^-_{T,L}) > N/2) \ge 1 - \P(\Omega_0)^{1/4}
$$
The same calculation \xyz{gives for the SCP on $\ZZ^d$}:
\beq
\P( AB(S^{i,+}_{T,L}) > N/2),  \P( AB(S^{i,-}_{T,L}) > N/2) \ge 1 - \P(\Omega_0)^{1/4d}
\label{oplr}
\eeq

It remains to show if we have many $AB$'s then with high probability at some time $t \le 1.01T$ we will have an box of length $2n$ that is filled with $AB$. 
Denote by $F_1$ the event that ${}_W\xi^{[-n,n]^d}_{t} \supset x+[-n,n]^d$ for some $x\in [hL,(a+h)L] \times I^{d-1}$ and some $t\in [0,1.01T]$,
and $F_2 = \{  AB(S^{1,+}_{T,L}) > N/2\} $. Let $\ep$ be the constant in Theorem \ref{opstep1} and let $\delta = (\ep/4)^4$.
Note that by the ergodic theorem we know that $\P(\Omega_0) \to 0$ as $n\to \infty$, 
so if we pick $n$ large enough then $\P(\Omega_0) \le \delta/2$. Having chosen the value of $n$, we will show

\begin{lemma} \label{claim} 
	$\P(F^c_1\cap F_2) \leq \delta^{1/4}$ for $N$ large enough.
\end{lemma}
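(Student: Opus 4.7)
The plan is to treat each $AB$ counted in $F_2$ as a potential seed that, with positive probability depending only on $n,\lambda,\mu,d$, triggers a filled translate of $[-n,n]^d$ inside $[hL,(a+h)L]\times I^{d-1}$ within a bounded time $T_0=T_0(n,\lambda)$; by selecting many such seeds whose ``trial cells'' are pairwise disjoint in space-time, I obtain a family of independent Bernoulli trials whose joint failure probability decays exponentially in $N$.

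\textbf{Step 1 (positive-probability filling).} For each candidate seed $(x_0,t_0)$ with $x_0$ in the spatial part of $S^{1,+}_{L,T}$ and $t_0\le 1.01T - T_0$, I set the trial cell $B(x_0,t_0) = \Lambda(x_0) \times (t_0, t_0+T_0]$, where $\Lambda(x_0)$ is an $L^\infty$-ball of radius $3n$ about $x_0$, intersected with $[hL,(a+h)L]\times I^{d-1}$ and chosen so that some target translate $y+[-n,n]^d$ lies inside $\Lambda(x_0)$. Let $U(x_0,t_0)$ denote the event that, starting from a single $AB$ at $(x_0,t_0)$ and using only the Poisson arrows in $B(x_0,t_0)$, every site of $y+[-n,n]^d$ is in state $AB$ at time $t_0+T_0$. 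Prescribing an explicit finite sequence of infection-arrow firings that drives both an $A$ and a $B$ onto each target site, together with absence of death arrows at these sites during the relevant subintervals, gives $\P(U(x_0,t_0)) \ge q$ for some $q = q(n,T_0,\lambda,\mu,d)>0$ uniform in $(x_0,t_0)$. By the attractive coupling of the graphical representation, $U(x_0,t_0)$ together with an $AB$ at $(x_0,t_0)$ in $\xi$ forces $F_1$ to hold.

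\textbf{Step 2 (many disjoint trials).} On $F_2$ there are more than $N/2$ counted $AB$'s in $S^{1,+}_{L,T}$. Restricting to those with $t_i \le 1.01T - T_0$ and $x_i$ at distance $\ge 3n$ from the spatial boundary of the slab leaves at least $N/4$ usable seeds, provided $L$ and $T$ are chosen large enough relative to $n$ and $T_0$; this is compatible with the flexibility in Lemma~\ref{persis_new}, where $L_j$ and $T_j$ may be taken sufficiently large after $N$ is fixed. The non-overlap convention in the definition of $AB(\cdot)$ caps the number of counted $AB$'s in any space-time box of spatial volume $(6n)^d$ and temporal length $T_0$ by a constant $C_0 = C_0(n,T_0,d)$, so a greedy selection yields $M \ge N/(8 C_0) = cN$ usable seeds $(x_{i_j},t_{i_j})$ whose trial cells are pairwise disjoint.

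\textbf{Step 3 (exponential bound and main obstacle).} Since the trial cells are pairwise disjoint and each $U(x_{i_j},t_{i_j})$ depends only on Poisson arrows strictly in the future of $t_{i_j}$ inside $B(x_{i_j},t_{i_j})$, the trial events are mutually independent conditional on the seed positions, each with probability at least $q$. Consequently
$$
\P(F_2 \cap F_1^c) \le \E\bigl[\mathbf{1}_{F_2}(1-q)^M\bigr] \le (1-q)^{cN},
$$
which falls below $\delta^{1/4}$ once $N$ is chosen sufficiently large; the threshold is finite because $c$ and $q$ depend only on $n,T_0,\lambda,\mu,d$, not on $L$ or $T$. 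The main obstacle is arranging for the seed-existence events (which are measurable with respect to the full past of the process) to be cleanly decoupled from the trial events: this is resolved by letting each trial depend only on Poisson arrows strictly in the future of its seed inside a cell disjoint from every other trial cell, so the trial events become deterministic functions of disjoint and hence independent Poisson data.
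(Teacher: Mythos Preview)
Your overall strategy matches the paper's: treat each counted $AB$ as a seed for a local trial that, with probability $q=q(n,\lambda,\mu,d)>0$, fills a translate of $[-n,n]^d$ using only Poisson arrows in a small future cell, and then bound the probability that all trials fail. Step~1 is correct.

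The gap is in Step~3, precisely at the point you flag as the ``main obstacle.'' You argue that the trial events $U(x_{i_j},t_{i_j})$ are functions of disjoint Poisson data and are therefore mutually independent; that much is true. But the seed \emph{selection} is not independent of the trials: the presence of an $AB$ at $(x',t')$ with $t'>t_{i_j}$ is determined by the process up to time $t'$, hence by all Poisson arrows up to $t'$, including those inside the earlier trial cell $B(x_{i_j},t_{i_j})$. Since your greedy selection is based on the full list of counted $AB$'s in $S^{1,+}_{L,T}$, the random set of seed positions is correlated with the contents of every trial cell, and conditioning on that set can move the conditional success probability of each $U_j$ away from $q$. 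The inequality $\P(F_2\cap F_1^c)\le \E[\mathbf 1_{F_2}(1-q)^M]$ is therefore not justified by disjointness of the cells alone. The paper (following Bezuidenhout--Gray) avoids this by running the trials \emph{sequentially}: take the first $AB$ at a stopping time $t_1$, run its trial in $(x_1+[-n,n]^d)\times[t_1,t_1+1]$; then take the first $AB$ at time $t_2>t_1$ (ignoring points within $4n$ of $x_1$ when $t_2\le t_1+1$), run its trial; and so on. At step $k$ one works with a filtration $\mathcal F_k$ containing the Poisson data up to $t_k$ \emph{together with} the data in all earlier trial cells; the $k$-th trial cell lies entirely outside this information, so $\P(\text{trial $k$ fails}\mid\mathcal F_k)\le\alpha$, and iterated conditioning gives the product bound $\alpha^k$. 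Your batch selection cannot be slotted into such a filtration argument because it examines $AB$'s at arbitrarily late times before any trial is run.
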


Assume Lemma \ref{claim} for the moment, it follows from Lemma \ref{persis_new}
$$
\P(F_1^c)\leq \P(F_2^c)+\P(F_1^c\cap F_2)\leq 2\delta^{1/4} = \ep,
$$
Equation \eqref{topop} can be shown similarly by using \eqref{optop} in place of \eqref{oplr}.

\begin{proof}[Proof of Lemma \ref{claim}.] Following the proof of (5.6) in \cite{BG},
	we will use an algorithm that will stop if we find $(2n)^d$ consecutive $AB$'s. It is designed so that each time we choose an $AB$ site,  the probability that this chosen point fails to generates $(2n)^d$ $AB$'s within next one unit of time is less than $\alpha$ which is a number only depends on $n$. Also whether or not we can get $(2n)^d$ $AB$'s is independent of information obtained before. This proves that $\P(F^c_1\cap F_2) \leq 2(2\delta)^{\frac{1}{4}}$ as long as the algorithm allows us enough choices on $F_2$.
	
	Let $t_1$ be the first time that an $AB$ appears in $S^{1,+}_{L,T}$ and let the coordinate of this first point by $(x_1,t_1)$.
	Let ${\cal F}_1$ be the sigma-algebra generated by the Poisson processes in $W$ up to time $t_1$. Given ${\cal F}_1$, the probability that this chosen point fails to generates an interval of $(2n)^d$ $AB$'s in $(x_1+[-n,n]^d)\times [t_1,t_1+1]$ using only Poisson arrivals in $(x_1+[-n,n]^d)\times [t_1,t_1+1]$ is less than $\alpha$. 
	If we do get $(2n)^d$ AB's then the algorithm stops otherwise we continue.
	
	At the next step we try to find the first $t_2 > t_1$, so that we have an $AB$ at $(x_2,t_2)$ in  $S^{1,+}_{L,T}$. When $t_1 \le t_2 \le t_1+1$ we ignore points with $|x_2-x_1| \le 4n$. If $t_2>t_1+1$ we let ${\cal F}_1$ be the sigma-algebra generated by the Poisson processes in $W$ up to time $t_2$. If $t_1 \le t_2 \le t_1+1$, we need to add information about the Poisson processes in $(x_1+[-n,n]^d)\times [t_1,t_1+1]$. In either case, given ${\cal F}_2$, there is a probability of less than $\alpha$ that we cannot find $(2n)^d$ consecutive $AB$'s by using the Poisson process in $ (x_2+[-n,n]^d) \times [t_2,t_2+1]$.

	We continue the search until we either get $(2n)^d$ consecutive $AB$'s or we come to time $1.01T$.
	The probability of failure when running this algorithm for $k$ steps is $\le \alpha^k$. This will be $\le \delta^{1/4}$ if
	$$
	k > (1/4) \frac{\log(\delta)}{\log(\alpha)}.
	$$
	Recall that we count two space time points $(y,s_1)$ and $(y,s_2)$ as different $AB$ only if $\abs{s_1-s_2}\geq 1$.  Hence at each step of the algorithm we ignore at most $(8n)^d$ $AB$ points.  It follows that if
	$$
	N>(1/4) \frac{\log \delta}{\log \alpha}((8n)^d+1),
	$$
	then we can run the algorithm for at least $1/4(\log \delta)/(\log \alpha)$ steps if needed, which implies the probability of failure is $\leq \delta^{1/4}$. This completes the proof of Lemma 7 and hence completes the proofof Theorem 2.
\end{proof}

\clearp

\section{Proof of Theorem \ref{SCPD}}\label{sec:thm3}

The first step is define the dual process. For this we need to construct the process using Poisson processes. For each ordered pair of neighboring sites $x,y \in \ep\ZZ^d$ and level $i$ we have a rate $\lambda$ Poisson process $T^{(x,y),i}_n$, $n \ge 1$ which cause births from $(x,i)$ to $(y,i)$. For each unordered pair of neighbors and level $i$ we have a rate $\ep^{-2}$ Poisson process $S^{x,y,i}_n$, $n \ge 1$ the values at $(x,i)$ to $(y,i)$. For each site $x$ and level $i$ we have a rate $\mu$ Poisson process $T^{(x,i)}_n$, $n \ge 1$ that always causes death of a particle at $x$ on level $i$, and a rate $1-\mu$ Poisson process $S^{(x,i)}_n$, $n \ge 1$ that causes death of a particle at $x$ on level $i$ if there is no particle at $x$ on level $1-i$.

The first three Poisson processes are part of an additive process but the last one is not, so the dual process that works backwards from $x$ on level $i$ at time $t$ is what \cite{DC} call the influence set. The state at $(x,i)$ at time $t$ can be computed if we know the states of the sites in ${\cal I}^{x,i,t}_s$ at time $t-s$. When a particle in  ${\cal I}^{x,i,t}_s$ hits an arrival in one of our Poisson processes the following changes occur.

\begin{itemize}
	\item $T^{(x,y),i}_n$. We draw an arrow from $(y,i) \to (x,i)$ to indicate a potential birth and add a particle at $(y,i)$. If there is already a particle at $(y,i)$ the two particle coalesce to one.
	\item $S^{x,y,i}_n$. We draw an arrow from $(x,i) \to (y,i)$ and an arrow from $(y,i) \to (x,i)$ to indicate that the values will be exchanged. We move the particle at $(x,i)$ to $(y,i)$. if there is a particle at $(y,i)$ it moves to $(x,i)$ 
	\item $T^{x,i}_n$. We kill the particle at $(x,i)$ and write a $\delta$ next to it.
	\item $S^{x,i}_n$. We write a $\delta$ at $(x,i)$ and draw an arrow from $(x,1-i) \to (x,i)$ to indicate that if $(x,1-i)$ is occupied the particle at $(x,i)$ is saved from death. We leave $(x,i)$ in the dual and add a particle at $(x,1-i)$. 
\end{itemize} 

For more details about the construction of the dual see Section 2 in \cite{DC}. In that section it is shown that the correlations of particle movements caused by stirring neighboring particles tend to 0 as $\ep\to0$, so the dual converges to a branching Brownian motion. In Sections 2d and 2e of \cite{DC}, the convergence of the dual to branching Brownian motion is used to conclude that the $q^\ep_A(t,x)$ and $q^\ep_B(t.x)$ converge to limits $q_A(t,x)$ and $q_B(t,x)$ that satisfy  \eqref{evoqA} and \eqref{evoqB}. This is also proved in Chapter 2 of \cite{CDP}. However the fact that we have stirring instead of random walks makes things simpler: we do not have to trim the dual to remove particles that exist for only  a very short amount of time.

The results of Aronson and Weinberger \cite{AW75,AW78} imply that when $\lambda>1$ Lemma 3.2 in \cite{DC} holds and consequently there is a nontrivial stationary distribution. Essentially the same proof is carried out in Chapter 6 of \cite{CDP}, but in that reference \xyz{a more careful comparison with percolation is used} to guarantee that the densities are close to the values that emerge from the mean-field ordinary differential equation. 

To prove result (ii) we have to use a comparison with oriented percolation to show that holes (regions that are $\equiv 0$) grow linearly so the system dies out. In doing this we have to deal with the fact that our block event that produces dead regions sometimes fails. \xyz{To guarantee} that the configuration is $\equiv 0$ even in that case, we use another percolation argument to show that the failed block events are surrounded by a connected dead region so it is impossible for there to be particles in the region with the failed block construction. This argument is done in Sections 4 and 5 of \cite{DC} for the quadratic contact process, and in greater generality in Chapter 7 of \cite{CDP}.  


\begin{thebibliography}{99}

\bibitem{AG}
Aizenman M. and Grimmett G. (1991)
Strict monotonicity for critical points in percolation and ferromagnetic models
{\it Journal of Statistical Physics} 63, 817-835


\bibitem{AW75} 
Aronson, D.G. and Weinberger, H.F. (1975) 
Nonlinear diffusion in population genetics, combustion, and nerve pulse propagation. 
Pages 5--49 in {\it Partial Differential Equations and Related Topics.} 
Springer Lecture Notes in Math 446, Springer, New York.

\bibitem{AW78}
Aronson, D.G. and Weinberger, J.F. (1978) 
Multidimensional nonlinear diffusion arising in population genetics.  
{\it Adv. in Math.} 30, 33-76

\bibitem{BBS}
Balister, P., Bollob\'as, B., and Stacey, A. (1993)
Upper bounds for the critical value of oriented percolation in two dimensions.
{\it Proc. Roy. Soc. London, A.} 440, 201--220

\bibitem{BG}
Beizuidenhout, C., and Gray, L. (1994)
Critical attractive spin systems, 
{\it Annals of Applied Probability.} 22, 1160--1194

\bibitem{BezGr}
Bezuidenhout, C., and Grimmett, G. (1990)
The critical contact process dies out.
{\it The Annals of Probability.} 18, 1462--1482

\bibitem{BGK}
Bezuidenhout, C., Grimmett, G., and Kesten, H. (1993)
Strict inequality for critical values of Potts models and random-cluster processes. 
{\it Commun. Math. Phys.} 158, 1--16

\bibitem{BrGr}
Bramson, M., and Gray, L. (1991) 
A useful renormalization argument. 
Pages 113--152 in {\it Random walks, Brownian motion, and interacting particle systems.} 1
Birkh\:auser Boston, Boston, MA

\bibitem{CDP}
Cox, J.T, Durrett, R., and Perkins, E. (2013)
Voter model perturbations and reaction diffusion equations. 
Ast\'erisque, volume 349 arXiv:1103.1676 (106 pages)

\bibitem{D80}
Durrett, R. (1980)  On the growth of one dimensional contact processes. {\it Ann. Prob.} 8, 890--907

\bibitem{D84}
Durrett, R. (1984)  Oriented percolation in two dimensions. Special Invited Paper. {\it Ann. Prob.}, 999--1040

\bibitem{RDsex}
Durrett, R.
Some peculiar properties of a particle system with sexual reproduction. 
in {\it Stochastic Spatial Processes. Edited by P. Tautu.} 
Lecture Notes in Math 1212, Springer, New York

\bibitem{StFl}
Durrett, R. (1995)
Ten Lectures on Particle Systems. 
Pages 97-201 in {\it St. Flour Lecture Notes.} 
Lecture Notes in Math 1608.  Springer-Verlag, New York

\bibitem{PTE}
Durrett, (2010)
Probability: Theory and Examples, 4th ed.
{\it 
	Cambridge Series in Statisticaland Probabilistic Mathematics. Cambridge University Press, Cambridge.
}
\bibitem{DC}
Durrett, R. and Neuhauser, C. (1994)
Particle systems and reaction diffusion equations.  
{\it Annals of Probability.} 22, 289--333

\bibitem{DM}
Durrett, R., and Moller, A.M. (1991)
Complete convergence theorem for a competition model. 
{\it Prob. Theory Rel. Feilds.} 88, 121--136

\bibitem{DS}
Durrett, R., and Swindle, G. (1991) 
Are there bushes in a forest? 
{\it Stoch. Proc. Appl.} 37, 19-31

\bibitem{F2s}
Foxall, E. (2015)
New results on the two-stage contact process.
{\it Journal of Applied Probability.} 52, 258--268

\bibitem{FL}
Foxall, E. and Lanchier, N. (2015)
Generalized stacked contact processwith variable host fitness.
ArXiv:1511.01184v2.

\bibitem{HA}
Harris, T.E. (1978), Additive set valued Markov process and graphical methods, {\it Annals of Probability.} 6, 353-378

\bibitem{Kesten}
Kesten, H. (1982)
{\it Percolation Theory for Maathematicians.}
Birh\"auser, Boston

\bibitem{K2s}
Krone, S. (1999)
The two stage contact process.
{\it Annals of Applied Probability.} 9, 331--351


\bibitem{LY}
Lanchier, N. and Zhang, Y.(2016)
Some rigorous results for the stackedcontact process.
{\it ALEA, Lat. Am. J. Probab. Math. Stat.} 13(1), 193--222 

\bibitem{TL}
Liggett, T.M. (1999)
{\it Stochastic Interacting Systems:Contact, Voter and Exclusion Processes.} Springer, 1999


\bibitem{MN}
Menshikov M.V. (1987)
Quantitative estimates and rigorous inequalities for critical points of a graph and its subgraphs, {\it Theory Prob Appl.}
32, 544-547

\bibitem{CN}
Neuhauser, C. (1992)
Ergodic theorems for the multi-type contact process.
{\it Prob. Theory Rel. Fields} 91, 467--506

\bibitem{OSD}
de Olivera, M. M., Dos Santos, R.V., and Dickman, R. (2012)
Symbiotic two species contact process.
arXiv:1205.59741


\bibitem{OD}
de Olivera, M. M., and Dickman, R. (2014)
Phase diagram of the symbiotic two-species contact process,  
{\it Physical Review E.}  90, paper 032120

\bibitem{Rich}
Richardson, D. (1973)
Random growth in a tesselation.
{\it Proc. Camb. Phil. Soc.} 74, 515--528

\bibitem{Fetal}
Sampaio Filho, C.I.N., dos Saantos, T.B., Ara\'ujo, N.A.M., Carmona, H.A., Moreira, A.A., Andrade ,jr., J.S (2018)
The symbiotic contact process: phase transtions, hysteresis cycles.
arXiv:1806.11167


\bibitem{T80}
Toom, Andrei (1980). 
Stable and attractive trajectories in multicomponent systems. 
Pages 549--575 {\it Multicomponent Random Systems.} 
Edited by R. L. Dobrushin and Ya G. Sinai. Marcel Dekker, New York 


\end{thebibliography}



\ACKNO{We would like to thank the referees for their helpful comments which improved the clarity of our presentation.  }


\end{document}